\documentclass[11pt]{amsart}

\usepackage{amsmath} 
\usepackage{amsthm} 
\usepackage{amsfonts} 
\usepackage{amssymb} 
\usepackage{mathtools} 
\usepackage{thm-restate} 

\usepackage[style=alphabetic, maxnames=100, maxalphanames=100]{biblatex} 
\DeclareFieldFormat{postnote}{\mknormrange{#1}} 

\usepackage{bm} 
\usepackage{bbm} 

\usepackage{array} 
\usepackage[inline]{enumitem} 

\usepackage{microtype} 
\usepackage{lmodern} 

\usepackage[in]{fullpage} 

\usepackage[%
  plainpages=false,
  pdfpagelabels,
  colorlinks,
  citecolor=black,
  linkcolor=black,
  urlcolor=black,
  filecolor=black,
  bookmarksopen=false
]{hyperref} 
\usepackage[all]{hypcap} 
\hypersetup{hypertexnames=false}

\newtheoremstyle{thmstyle}
  {\medskipamount}
  {\smallskipamount}
  {\slshape}
  {0pt}
  {\bfseries}
  {.}
  { }
  {\thmname{#1}\thmnumber{ #2}{\normalfont\thmnote{ (#3)}}}
  
\newtheoremstyle{plainstyle}
  {\medskipamount}
  {\smallskipamount}
  {\rmfamily}
  {0pt}
  {\bfseries}
  {.}
  { }
  {\thmname{#1}\thmnumber{ #2}{\normalfont\thmnote{ (#3)}}}

\theoremstyle{thmstyle}
\newtheorem{theorem}{Theorem}[section]
\newtheorem{lemma}[theorem]{Lemma}
\newtheorem{definition}[theorem]{Definition}
\newtheorem{claim}{Claim}[theorem]
\newtheorem{remark}[theorem]{Remark}
\newtheorem{conclusion}[theorem]{Conclusion}

\theoremstyle{plainstyle}
\newtheorem{discussion}[theorem]{Discussion}
\newenvironment{proofof}[1]{\begin{proof}[Proof of #1.]}{\end{proof}}

\newlist{enumdef}{enumerate}{3}
\setlist[enumdef,1]{before={\leavevmode}, label={\arabic*.}, ref={\thetheorem.\arabic*}}
\setlist[enumdef,2]{before={\leavevmode}, label={\alph*.}, ref={\thetheorem.\theenumi.\alph*}}
\setlist[enumdef,3]{before={\leavevmode}, label={\roman*.}, ref={\thetheorem.\theenumi.\theenumii.\roman*}}

\setlist[enumerate,1]{label={\arabic*.}, ref={(\arabic*)}} 
\setlist[enumerate,2]{label={\alph*.}, ref={(\theenumi.\alph*)}} 

\numberwithin{equation}{section} 

\makeatletter
\newcommand{\combinesymbols}[3][\mathord]{#1{\mathpalette\combinesymbols@paletted{{#2}{#3}}}}
\newcommand{\combinesymbols@paletted}[2]{\combinesymbols@internal{#1}#2}
\newcommand{\combinesymbols@internal}[3]{\ooalign{\hss$\m@th #1#2$\hss\cr\hss$\m@th #1#3$\hss}}
\makeatother

\let\epsilon\varepsilon

\newcommand{\rn}{\bm}
\newcommand{\df}{\stackrel{\text{def}}{=}}
\newcommand{\symdiff}{\mathbin{\triangle}}
\newcommand{\otriangle}{\combinesymbols[\mathbin]{\bigcirc}{\triangle}}
\newcommand{\rest}{\mathord{\vert}}
\newcommand{\floor}[1]{\lfloor#1\rfloor}

\newcommand{\conc}{\mathbin{{}^\smallfrown}}
\newcommand{\fn}[2]{\mathord{{}^{#1}#2}}

\newcommand{\rstr}{\mathord{\upharpoonright}}

\DeclareMathOperator{\UC}{UC}
\DeclareMathOperator{\VC}{VC}
\DeclareMathOperator{\SVC}{SVC}
\DeclareMathOperator{\VCN}{VCN}
\DeclareMathOperator{\ev}{ev}
\DeclareMathOperator{\lgn}{lg}

\newcommand{\PP}{\mathbb{P}}
\newcommand{\RR}{\mathbb{R}}

\newcommand{\One}{\mathbbm{1}}

\newcommand{\cB}{\mathcal{B}}
\newcommand{\cC}{\mathcal{C}}
\newcommand{\cE}{\mathcal{E}}
\newcommand{\cH}{\mathcal{H}}
\newcommand{\cK}{\mathcal{K}}
\newcommand{\cL}{\mathcal{L}}
\newcommand{\cP}{\mathcal{P}}

\newenvironment{centerquote}[1][]{%
  \smallskip
  \begin{quotation}
    #1
}{%
  \end{quotation}
  \smallskip
}

\def\Cervonenkis{\v{C}ervonenkis}

\def\alt{D}
\let\emph\textit

\newcommand{\WARNING}[2][Warning]{
  \typeout{^^J#1 on line \the\inputlineno: #2^^J}
  \textbf{#1 on line \the\inputlineno: #2}
}

\title{%
  Some model-theoretic consequences\\
  of high-arity uniform convergence, part~I%
}
\author{%
  Leonardo N.~Coregliano
  \and
  Maryanthe Malliaris
}

\thanks{%
  LNC: Research partially supported by the Suzuki Postdoctoral Fellowship.
  \newline\indent
  MM: Research partially supported by NSF-BSF~2051825.
}

\address{Department of Mathematics, University of Chicago, 5734 S.~University Avenue, Chicago, IL 60637, USA}
\email{lenacore@uchicago.edu}
\address{Department of Mathematics, University of Chicago, 5734 S.~University Avenue, Chicago, IL 60637, USA}
\email{mem@math.uchicago.edu}

\begin{document}

\begin{abstract}
  We show that certain families of sets in $\RR^2$ (or $\RR^n$) which are neither definable nor have bounded $\VC$-dimension are
  nonetheless uniformly approximately definable in the real field, an o-minimal structure.
\end{abstract}

\maketitle

The aim of this paper is to show that certain families of sets in $\RR^2$ which are neither definable nor have bounded
$\VC$-dimension are nonetheless uniformly approximately definable in the real field, an o-minimal structure, in a very natural
way. We make use of a high-arity uniform convergence statement proved in the context of high-arity PAC learning~\cite{CM24},
although for most items in the present paper we just use a special case which could be extracted from work of Livni--Mansour on
graph-based discriminators~\cite{LM19a,LM19b} (as we explain in the appendix on uniform convergence).
 
We thank J.~Pila and L.~van den Dries for some thoughtful remarks on earlier versions and we note our appreciation
for the work of Karpinski--Macintyre~\cite{KM00}.

\section{Some motivation: Reconstruction from small random samples}

This section is not necessary for reading the proofs, although it tries to communicate the main idea. First some context. In
theoretical computer science, Valiant's theory of PAC learning~\cite{Val84} has to do with the problem of when possibly
infinitary objects (presented as subsets of a given set) can be reconstructed from a very small random sample. Learning happens
when the reconstruction is, with high probability, ``good enough'' (hence the name Probably Approximately Correct). The
connection with model theoretic approximation was already seen in the very interesting work of Karpinski and
Macintyre~\cite{KM00}.
 
For better or worse, however, \emph{not so many classes can be PAC learned}, because the fundamental theorem in the subject
(Vapnik-Chervonenkis~\cite{VC71,VC15}, Blumer--Ehrenfeucht--Haussler--Warmuth~\cite{BEHW89}, Natarajan~\cite{Nat89}; see
also~\cite[\S 6.4]{SB14} for a modern presentation) gives a characterization: a family of sets is PAC learnable if and only if
it has finite $\VC$-dimension.
 
Writing~\cite{CM24}, we considered learning higher-dimensional objects. This required understanding why one should reject the
conventional wisdom to simply code the higher-dimensional families as families of sets and apply the usual PAC theory. Indeed,
here is an interesting point, which became the thesis of~\cite{CM24}: many natural families of infinite (unary) $\VC$-dimension
have geometric or other mathematical structure \emph{in higher dimensions} which it should be possible to leverage in order to
learn (in the sense of achieving some good reconstruction from typical sample), despite the PAC theory classifying them as
``unlearnable.''

To achieve this higher reconstruction we had to rebuild the learning machine to allow for structured correlation in the sample
which could pick up more information. The main work of~\cite{CM24,CM25a} was thus to state and prove a higher version of a
fundamental theorem (with its many different equivalent conditions), for this new correlated sampling, showing that such higher
learning was possible and precisely characterizing the larger class for which it could be done.

One illustration of this correlated sampling from a model-theoretic point of view is the following: Suppose we are given a
family of models on the same domain. The adversary chooses a measure on the domain and M in the family, both unknown to us.
Based on receiving a small random sample of points in the domain along with the information of the submodel induced by $M$, we
try to guess the rest of $M$. In doing so note it is generally not sufficient to guess $M$ up to isomorphism (the family may
even consist of many automorphic images of the same model, or may include differences which become trivial under automorphism,
such as choosing a specific forking extension): this is an important point where the information available to learning differs
from the information available to property testing or regularity. A more precise illustration will be developed in the present
paper using as a key ingredient the uniform convergence statement in the Appendix which characterizes when grid-based
reconstruction will be accurate.

\section{Global conventions}
\label{sec:global-conventions}

Here are our global conventions in this paper: 
\begin{enumerate}
\item $\cL\df\{{+},{\times},{-}, 0, 1, {<}\}$ is the language of ordered rings.
\item Unless otherwise stated, all measures we use here are Borel probability measures on $\RR$, or product measures arising
  from these. In the case of countable (i.e., countably infinite) or finite sets, we always take the discrete $\sigma$-algebra.
\item We work in the $\cL$-model $M = (\RR; {+}, {\times}, {-}, 0, 1, {<})$, that is, in the reals as an ordered field. Hence we
  are in a standard Borel space.
\item If the reader prefers: what we actually use is that $M$ is an ordered field whose order topology is Polish and that each
  element of $\cC$ belongs to the associated Borel $\sigma$-algebra. However, we need the associated standard Borel space so our
  possibilities are in any case strongly constrained both in terms of cardinality and up to homeomorphism.
\item We will be reasoning about families $\cC$: for us $\cC$ denotes a family of subsets of $\RR^2$ such that each set
  $C\in\cC$ is measurable.
\item $\cH$ is a hypothesis class (in this paper, a family of subsets of $\RR^2$, with some measurability conditions, see
  Definition~\ref{def:hypothesis}).
\item $\cH\subseteq\cC$.  
\end{enumerate}
We will focus on $\RR^2$ since it is illustrative without being notationally complex; but we point out that all theorems
presented in this paper extend to $\RR^n$ in the obvious way.

\section{Meta-theorem}
\label{sec:meta-theorem} 

A main ingredient in the proofs will be high-arity uniform convergence; a simplified form of it is stated as
Theorem~\ref{thm:twoUCinformal} below, while a formal one (Theorem~\ref{thm:twoUC}) is stated and proved in
Appendix~\ref{sec:UC}.

Note that in this section, we are proving a theorem based on certain sufficient conditions. In our applications we will have to
check that these conditions are met.

\begin{definition}\label{def:svc}
  The family $\cH$ has \emph{finite slicewise $\VC$-dimension}, or \emph{finite $\SVC$} for short, when there is some $d <
  \omega$ so that any slice of the family along any axis-parallel line produces a family of $\VC$-dimension $\leq d$ (see
  Definition~\ref{def:VC} for the definition of $\VC$-dimension).

  More formally, for every $w,z_0,\ldots,z_d\in\RR$, there exist $A,B\subseteq d+1$ such that for every $H\in\cH$, we have
  \begin{align*}
    \{i < d+1 : (w,z_i)\in H\} & \neq A, &
    \{i < d+1 : (z_i,w)\in H\} & \neq B.
  \end{align*}
\end{definition}

Since our families are not assumed to be definable, we note that $\SVC$ (see Definition~\ref{def:svc}) is really a property of
the family and not of the individual instances. For instance, if $\cH$ contains only one set then it will have finite $\SVC$
regardless of the complexity of the set.

In the next definition, to say that $\cH$ is a hypothesis class means that it is a family of subsets of $\RR^2$ [both 
``$\RR$'' and ``2'' are assumptions specific to our present paper] which satisfies
some additional measurability assumptions [with these we are recalling usual assumptions 
needed for high-arity uniform convergence, and in general for learning algorithms to make sense]. 

\begin{definition}\label{def:hypothesis}
  Suppose $\cH\subseteq\cP(\RR^2)$ is a set of subsets of $\RR^2$. We say that $\cH$ is a hypothesis class when it is equipped
  with a $\sigma$-algebra such that:
  \begin{enumerate}
  \item the evaluation map $\ev\colon\cH\times\RR^2\to\{0,1\}$ given by
    \begin{gather*}
      \ev(H,\overline{x})\df \One[\overline{x}\in H] \df
      \begin{dcases*}
        1, & if $\overline{x}\in H$,\\
        0, & if $\overline{x}\notin H$,\\
      \end{dcases*}
    \end{gather*}
    is a measurable function, where $\cH\times\RR^2$ is equipped with the product $\sigma$-algebra of $\cH$ and the Borel
    $\sigma$-algebra of $\RR^2$ (in particular, this implies that every $H\in\cH$ has to be Borel-measurable).
  \item for every $H\in\cH$, the set $\{H\}$ is measurable.
  \item for every standard Borel space $\Upsilon$ and every measurable set $Y\subseteq\cH\times\Upsilon$, the projection of $Y$
    onto $\Upsilon$, i.e. the set $\{\upsilon\in\Upsilon : \exists H\in\cH, (H,\upsilon)\in Y\}$, is measurable in every
    completion of a Borel probability measure on $\Upsilon$ (this is called ``universally measurable'')\footnote{The easiest way
    of ensuring that this property holds (and indeed is how we will show this property holds in our arguments) is by equipping
    $\cH$ with a topology that makes it a Suslin space and then taking the Borel $\sigma$-algebra.}.
  \end{enumerate}
\end{definition}

\begin{remark}\label{rmk:adding-F}
  If $\cH\subseteq\cP(\RR^2)$ is a hypothesis class and $F\subseteq\RR^2$ is a measurable set then $\cH\cup\{F\}$ remains a
  hypothesis class (when equipped with the coproduct $\sigma$-algebra).
\end{remark}

At this point, we can give an informal statement of the key ingredient, Theorem~\ref{thm:twoUC} (a formal statement and a proof
from the results of~\cite{CM24} can be found in Appendix~\ref{sec:UC}):
\begin{theorem}[Informal version of Theorem~\ref{thm:twoUC}]\label{thm:twoUCinformal}
  Given $\epsilon,\delta > 0$ and $d < \omega$, there exists $m^{\UC} = m^{\UC}(\epsilon,\delta,d)$ such that for all Borel
  probability measures $\mu_0$ and $\mu_1$ over $\RR$, every hypothesis class $\cH\subseteq\cP(\RR^2)$ with $\SVC(\cH)\leq d$,
  (optional) every choice of one additional measurable $F\subseteq\RR^2$, and every positive integer $m\geq m^{\UC}$, if we
  sample $\rn{\overline{a}}\sim\mu_0^m$ and $\rn{\overline{b}}\sim\mu_1^m$ independently, then with probability at least
  $1-\delta$, the resulting grid $\{ (\rn{a}_i, \rn{b}_j) : i,j < m \}$ will be \emph{two-way $\epsilon$-representative} for
  $\cH\cup\{F\}$ in the sense that for every $H_0,H_1\in\cH\cup\{F\}$, the quantities
  \begin{align*}
    L_{\mu_0,\mu_1}(H_0,H_1)
    & \df
    (\mu_0\otimes\mu_1)(H_0\symdiff H_1),
    &
    L_{\rn{\overline{a}},\rn{\overline{b}}}(H_0,H_1)
    & \df
    \frac{\lvert\{(i,j)\in m\times m : (\rn{a}_i,\rn{b}_j)\in H_0\symdiff H_1\}\rvert}{m^2}
  \end{align*}
  are $\epsilon$-close.
\end{theorem}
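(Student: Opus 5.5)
Since the statement is informal and stands in for the formal Theorem~\ref{thm:twoUC}, I will prove it by reducing to the two-dimensional analogue of classical Vapnik--\Cervonenkis{} uniform convergence. The goal is to show that the grid empirical averages of the loss functions $\One[\overline{x}\in H_0\symdiff H_1]$ converge uniformly over pairs in $\cH\cup\{F\}$.

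\textbf{Step 1: reduce to single sets.} Let $\cK\df\{H_0\symdiff H_1 : H_0,H_1\in\cH\cup\{F\}\}$. Two-way $\epsilon$-representativeness of the grid is exactly uniform closeness of $(\mu_0\otimes\mu_1)(K)$ and its grid average, over all $K\in\cK$. First, $\cH\cup\{F\}$ has $\SVC\le d+1$, since adding one set can raise each slice's $\VC$-dimension by at most one. Second, each slice of $\cK$ is a family of pairwise symmetric differences from a $\VC$-class of dimension $\le d+1$. By the Sauer--Shelah lemma, its shatter function is polynomial of degree $O(d)$, so each slice of $\cK$ has $\VC$-dimension $d'=O(d\log d)$. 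Measurability is supplied by Definition~\ref{def:hypothesis} and Remark~\ref{rmk:adding-F}: the suprema below are over universally measurable events.

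\textbf{Step 2: two-step conditioning.} Write the grid error as
\begin{align*}
  \Bigl|\frac{1}{m^2}\sum_{i,j}\One_K(\rn a_i,\rn b_j)-(\mu_0\otimes\mu_1)(K)\Bigr|
  \le \frac1m\sum_i\Bigl|\frac1m\sum_j\One_K(\rn a_i,\rn b_j)-\mu_1(K_{\rn a_i})\Bigr|
  + \Bigl|\frac1m\sum_i f_K(\rn a_i)-\int f_K\,d\mu_0\Bigr|,
\end{align*}
where $K_a$ is the slice at $a$ and $f_K(a)=\mu_1(K_a)$.

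For the first term, condition on $\rn{\overline a}$. For each fixed $a$, the family $\{K_a : K\in\cK\}$ has $\VC$-dimension $\le d'$, so classical uniform convergence gives error $\le\epsilon/2$ with probability $1-\delta'$ for that $a$. Taking expectations, the expected fraction of bad rows is at most $\delta'$, and Markov's inequality controls it. Crucially, the supremum is taken inside each row, which is legitimate because the bound holds uniformly over $K$ for each row.

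\textbf{Step 3: the second term is the main obstacle.} The functions $a\mapsto\mu_1(K_a)$ are $[0,1]$-valued, and we need a uniform law of large numbers for them. The slice condition along the other axis is the relevant hypothesis. I would replace $\mu_1$ by the empirical measure on $\rn{\overline b}$, which Step 2 (with the roles of the axes swapped) shows is accurate. Then $f_K$ becomes $\hat f_K(a)=\frac1m\sum_j\One_K(a,\rn b_j)$. For fixed $\rn{\overline b}$, the family $\{\hat f_K\}$ is determined by traces of $\cK$ on the vertical slices $\{\rn b_j\}$. Its fat-shattering dimension, or equivalently its covering number of the sets $\{a : (a,\rn b_j)\in K\}$ across $j$, is controlled by the $\VC$-dimension $d'$ of horizontal slices. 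A standard symmetrization and chaining argument then bounds this term.

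The delicate point is that $\hat f_K$ depends on $m$ slice sets simultaneously. A naive bound on the number of their combinations grows like $m^{O(md')}$, which is too large. I expect this is why the paper defers to \cite{CM24}, which handles exactly this issue via a high-arity Haussler packing or a symmetrization over both coordinates at once. I would first try a double symmetrization: resample ghost copies of both $\rn{\overline a}$ and $\rn{\overline b}$, and swap coordinates independently in each. This reduces the bound to a Rademacher complexity of $\cK$ on a $2m\times 2m$ grid, which I would then bound row-by-row and column-by-column using $d'$.
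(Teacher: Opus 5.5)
Your Step~1 is exactly the paper's reduction. The paper sets $\cH'\df\cH\cup\{F\}$, notes $\SVC(\cH')\le d+1$, and bounds $\SVC(\cH'\otriangle\cH')\le C(d+1)$ with $C\le 9.09$ by Lemma~\ref{lem:symdiff} (Sauer--Shelah plus a binary-entropy estimate). So your $O(d\log d)$ is in fact $O(d)$, though either suffices. It then applies the \emph{one-way} Theorem~\ref{thm:oneUC} to $\cH'\otriangle\cH'$ against $F=\varnothing$. That one-way theorem is not proved in the paper; it is quoted from \cite{CM24} (alternatively \cite{LM19a,LM19b}). Had you cited it after Step~1, you would have the paper's proof. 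Instead you try to prove it, and there the proposal has a genuine gap.

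Your handling of the first term in Step~2 (row-wise classical $\VC$ bounds plus Markov on the fraction of bad rows) is sound. The second term, however, is never established: uniform convergence of $\frac1m\sum_i\mu_1(K_{\rn a_i})$ to $(\mu_0\otimes\mu_1)(K)$ over $K$ in your family $\cK$. Your sketched route makes no progress. Write $K^b\df\{a : (a,b)\in K\}$. Replacing $f_K$ by $\hat f_K$ leaves you needing $\int\hat f_K\,d\mu_0=\frac1m\sum_j\mu_0(K^{\rn b_j})$ to be uniformly close to $\int f_K\,d\mu_0=(\mu_0\otimes\mu_1)(K)$, which is the same problem with the axes exchanged. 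Step~2 with the axes swapped does not supply this. It only compares the grid average with $\frac1m\sum_j\mu_0(K^{\rn b_j})$, and that is exactly the uniform law for $\hat f_K$ you then propose to re-prove by chaining. The double symmetrization is only announced, not carried out.

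The missing idea is short, and the worry about $m^{O(md')}$ joint patterns is a red herring. You never need to count joint traces, because Rademacher averages pass through the $\mu_1$-integral. Since $f_K(a)=\int\One[a\in K^b]\,d\mu_1(b)$, for any fixed $a_0,\ldots,a_{m-1}$ we have
\[
  \mathbb{E}_\sigma\sup_{K\in\cK}\Bigl\lvert\frac1m\sum_{i<m}\sigma_i f_K(a_i)\Bigr\rvert
  \le
  \int\mathbb{E}_\sigma\sup_{G\in\cK^b}\Bigl\lvert\frac1m\sum_{i<m}\sigma_i\One_G(a_i)\Bigr\rvert\,d\mu_1(b),
\]
where $\cK^b\df\{K^b : K\in\cK\}$. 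This family has $\VC$-dimension at most $d'$ by the slice condition along the other axis (modulo measurability in $b$, which is the kind of issue the hypothesis-class conditions are there to handle). Sauer--Shelah and Massart's lemma bound the integrand by $O(\sqrt{d'\log m/m})$ uniformly in $b$. So after the usual symmetrization, the expected uniform deviation in your second term is $O(\sqrt{d'\log m/m})$, and Markov's inequality finishes.

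Together with your Steps~1--2, this would give a self-contained proof with $m$ of order $\frac{d}{\epsilon^2\delta^2}\log\frac{d}{\epsilon^2\delta^2}$ (using the linear bound on $d'$), the same shape as in Theorem~\ref{thm:twoUC}. As written, though, the proposal only reduces the statement to the one-way theorem and leaves that theorem unproved.
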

Informally, the above says that in a typical sufficiently large sample, the relative size
$L_{\rn{\overline{a}},\rn{\overline{b}}}(H_0,H_1)$ of the symmetric difference $H_0\symdiff H_1$ between two sets
$H_0,H_1\in\cH\cup\{F\}$ on the grid arising from the sample is $\epsilon$-close 
to the measure $L_{\mu_0,\mu_1}(H_0,H_1)$ of the of $H_0\symdiff H_1$.

The quantities $L_{\mu_0,\mu_1}(H_0,H_1)$ and $L_{\rn{\overline{a}},\rn{\overline{b}}}(H_0,H_1)$ are called the \emph{total
loss} (or \emph{total distance}) and the \emph{empirical loss} (or \emph{empirical distance}) on the sample
$(\rn{\overline{a}},\rn{\overline{b}})$, respectively.

\medskip

Let us now fix some notation for dealing with labeled grids. Given sequences $\overline{a} = \langle a_i : i < m \rangle$,
$\overline{b} = \langle b_j : j < m \rangle$ from $\RR$, the corresponding grid is $\Gamma = \Gamma(\overline{a}, \overline{b})
= \{ (a_i, b_j) : i, j < m \}$ in $\RR^2$. (Since our sequences may contain repetitions, such grids may be rectangular rather
than square, and of course the rows and columns need not be evenly spaced.) A labeling is a map from $\Gamma$ to $\{0,1\}$. The
definability we will find is based on the information received from a labeled grid. It will be convenient to partition the
parameter variables of our formulas into grid variables and label variables, so our formulas will typically be called
$\varphi_m(x,y; \overline{z}, \overline{w})$ where $\overline{z}$ has length $\lgn(\overline{z})=2m$ (note $m$ is from the
subscript in $\varphi_m$) and $\overline{w}$ has length $\lgn(\overline{w})=m^2$, and the intention is that
$\overline{a}\conc\overline{b}$ will fill $\overline{z}$ and the labels\footnote{Since $0$, $1$ are constants in $\cL$ there is
no loss of generality in using these as our labels, but we could have used $2m^2$ arbitrary elements and coded yes or no using
equalities and inequalities.} will fill $\overline{w}$. One could add other parameters if needed.

\begin{definition}
  Given a grid $\Gamma=\Gamma(\overline{a},\overline{b})\subseteq\RR^2$ and a set $C\in\cC$, the \emph{labeling of $\Gamma$
  induced by $C$} is the map $\pi\colon\Gamma\to\{0,1\}$ given by $\pi(a_i,b_j) = 1$ if and only if $(a_i,b_j)\in C$.
\end{definition}

\begin{definition}\label{def:Hdr}
  Given $\cC$ and a hypothesis class $\cH$, say that
  \begin{centerquote}[\centering\it]
    $\cC$ is $\cH$-definably realizable on grids
  \end{centerquote}
  when there exists a family of $\cL$-formulas $\{\varphi_m(x,y; \overline{z}_m, \overline{w}_m) : m < \omega \}$ such that: 
  \begin{enumdef} 
  \item for each $m < \omega$, $\varphi_m = \varphi_m(x, y; \overline{z}_m, \overline{w}_m)$, $\lgn(\overline{z}_m) = 2m$,
    $\lgn(\overline{w}_m) = m^2$, and the intent is:
    \begin{enumdef}
    \item for any choice of parameters, this formula will define a subset of $\RR^2$;
    \item the variables $\overline{z}_m$ input $2m$ elements of $\RR$, from which we build a grid in $\RR^2$ of size $m^2$
    \item the variables $\overline{w}_m$ input a label of $0$ or $1$ for each element of the grid.
    \end{enumdef}
    We may drop the subscripts $m$ from the parameter variables for easier reading. 
  \item\label{def:Hdr:varphiincH} for each $m < \omega$, for all parameters arising from labelings, $\varphi_m$ takes values in
    $\cH$ meaning:
    \begin{centerquote}
      for every choice of $\overline{a} = \langle a_i : i < m \rangle$, $\overline{b} = \langle b_j : j < m\rangle$, and
      $C\in\cC$, which then determines a sequence $\overline{\ell}\in\fn{m^2}{\{0,1\}}$ which records the labeling of
      $\Gamma(\overline{a},\overline{b})$ by $C$, we have that the resulting formula
      $\varphi_m(x,y;\overline{a}\conc\overline{b}, \overline{\ell})$ defines a set in $\RR^2$ which belongs\footnote{This item
      does not a priori require any connection to the $C$ we chose, just that the output is always in our $\cH$.} to $\cH$.
    \end{centerquote}
  \item\label{def:Hdr:count} the family $\{\varphi_m : m < \omega\}$ witnesses that $\cC$ is definably realizable on any grid in
    the natural way in the sense that:
    \begin{centerquote}
      for every choice of $\overline{a} = \langle a_i : i < m\rangle$, $\overline{b} = \langle b_j : j < m\rangle$, $C\in\cC$,
      letting $\overline{\ell}$ record the labeling of the grid $\Gamma = \Gamma(\overline{a}, \overline{b})$ by $C$, we have
      that the symmetric difference of $\varphi_m(x,y; \overline{a}\conc\overline{b}, \overline{\ell})$ and $C$ on $\Gamma$ has
      counting measure zero.
    \end{centerquote}
  \end{enumdef}
\end{definition}

\begin{remark}
  We point out that in Definition~\ref{def:Hdr} there is one $\varphi_m$ for each $m$; the formulas do not depend on the choice
  of $C\in\cC$.
  
  Also note the condition of Definition~\ref{def:Hdr} will be a sufficient condition. One way to
  achieve~\ref{def:Hdr:varphiincH} in practice will be to first select or construct an appropriate family of formulas $\varphi_m$ in
  the language of ordered rings and then take our hypothesis class $\cH$ to be all sets defined by instances of the $\varphi_m$,
  which we would then just have to verify satisfies the measurability conditions for being a hypothesis class (sometimes we will
  make $\cH$ slightly larger to make measurability arguments easier).
\end{remark}

We arrive to our meta-theorem \ref{thm:meta-thm}, which says: given such objects, and assuming that $\cC$ (or just $\cH$) has
finite slicewise $\VC$-dimension, the following holds. For any $\epsilon > 0$, we can choose $\varphi_m$ (here $m$ and hence
$\varphi_m$ depends only on $\epsilon$, and not on the measures nor on the choice of $C \in \cC$) so that for any choice of
Borel probability measures $\mu_1,\mu_2$ on $\RR$ and any choice of $C\in\cC$, some instance of $\varphi_m$ with parameters in a
sampled $m \times m$ grid defines a set which is $\epsilon$-close to $C$ on $\RR^2$ in the sense of the product measure
$\mu_1\otimes\mu_2$. In fact, we can even find one grid which works for all $C$: there is a choice of
$\overline{a},\overline{b}$ which depends on the measures but not on $C$ so that once we are given $C$, we simply complete the
parameters by adding the labeling of $\Gamma(\overline{a},\overline{b})$ induced by this $C$. And such
$(\overline{a},\overline{b})$ is easy to find.

Observe that several things are happening in this theorem: the elements of the family $\cC$ are acquiring an approximate
definition, and that definition uses parameters in a \emph{sampled grid} of bounded size which moreover, as just noted,
satisfies a strong two-way uniformity property.
 
The statement repeats the hypotheses for easier quotation. 

\begin{theorem}\label{thm:meta-thm}
  For every $\epsilon > 0$ and $d < \omega$, there exists a positive integer $m < \omega$ such that the following hold:

  Let $\cH\subseteq\cC$ be families of subsets of $\RR^2$, all of whose elements are measurable, and which satisfy:
  \begin{enumerate}[label={\alph*.}, ref={(\alph*)}]
  \item $\cH$ is a hypothesis class with slicewise $\VC$-dimension at most $d$;
  \item $\cC$ is $\cH$-definably realizable on grids\footnote{The proofs only explicitly use that $\cH$ has finite slicewise
  $\VC$-dimension, but this implies the same property for $\cC$ because of $\cH$-definable realizability. If $\cH$ has finite $\SVC$
  and $\cC$ does not, there will exist a grid that on one side repeats a vertex whose slice shatters a set larger than the
  slicewise $\VC$-dimension of $\cH$ and on the other side contains the set it shatters, and it will be impossible to get the
  counting measure of the symmetric difference to be zero.}.
  \end{enumerate}
  Then for all Borel probability measures $\mu_0$ and $\mu_1$ on $\RR$, the following hold:
  \begin{enumerate}
  \item\label{thm:meta-thm:weak} For every $C\in\cC$, there exist parameters $\overline{c}$ for $\varphi_m$ so that
    \begin{gather*}
      (\mu_0\otimes\mu_1)\bigl(\varphi_m(\RR^2; \overline{c})\symdiff C\bigr)\leq\frac{\epsilon}{2}.
    \end{gather*}
  \item\label{thm:meta-thm:strong} There exist $\overline{a}$, $\overline{b}\in\fn{m}{\RR}$, so that for \emph{every} $C\in\cC$,
    there exists $\overline{\ell}\in\fn{m^2}{\{0,1\}}$ such that
    \begin{gather*}
      (\mu_0\otimes\mu_1)
      \bigl(
      \varphi_m(\RR^2; \overline{a}\conc\overline{b}, \overline{\ell})\symdiff
      \bigr)
      \leq
      \epsilon.
    \end{gather*}
  \item\label{thm:meta-thm:whp} In fact, such grids are easy to find: if $\rn{\overline{a}}$ is sampled i.i.d.\ from $\mu_0$ and
    $\rn{\overline{b}}$ is sampled i.i.d.\ from $\mu_1$, then with probability at least $1/2$, $\rn{\overline{a}}$,
    $\rn{\overline{b}}$ will satisfy item~\ref{thm:meta-thm:strong}; and we could make $\delta$ arbitrarily close to $1$ by
    adding a dependence on $\delta < 1$ in the choice of $m$.
  \end{enumerate} 
\end{theorem}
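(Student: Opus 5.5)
The strategy is to apply Theorem~\ref{thm:twoUCinformal} to the hypothesis class $\cH\cup\{C\}$ and read off the conclusion from the zero empirical loss that Definition~\ref{def:Hdr} provides on every grid. Fix $\epsilon>0$ and $d<\omega$, and choose $m\geq m^{\UC}(\epsilon,1/2,d)$, where $m^{\UC}$ is the bound from Theorem~\ref{thm:twoUCinformal}. Crucially, $m$ depends only on $\epsilon$, on $d$ and on the failure probability, never on the measures or on $C$. For the variant in item~\ref{thm:meta-thm:whp} with success probability $1-\delta$, use $m^{\UC}(\epsilon,\delta,d)$ instead. Now fix $\mu_0,\mu_1$ and $C\in\cC$. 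By Remark~\ref{rmk:adding-F}, $\cH\cup\{C\}$ is a hypothesis class, and $C$ is measurable by assumption. Theorem~\ref{thm:twoUCinformal} explicitly allows one extra measurable set $F$ beyond the class of $\SVC$ at most $d$, so we take $F=C$.

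For item~\ref{thm:meta-thm:weak}, sample $\rn{\overline{a}}\sim\mu_0^m$ and $\rn{\overline{b}}\sim\mu_1^m$. With probability at least $1/2>0$ the grid is two-way $\epsilon'$-representative for $\cH\cup\{C\}$, where for this item we run the argument with $\epsilon'=\epsilon/2$ (see the last paragraph). Fix such an outcome $\overline{a},\overline{b}$, and let $\overline{\ell}$ be the labeling of $\Gamma(\overline{a},\overline{b})$ induced by $C$. Set $H\df\varphi_m(\RR^2;\overline{a}\conc\overline{b},\overline{\ell})$. By Definition~\ref{def:Hdr:varphiincH} we have $H\in\cH$, and by Definition~\ref{def:Hdr:count} we have $L_{\overline{a},\overline{b}}(H,C)=0$. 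Representativeness then gives $(\mu_0\otimes\mu_1)(H\symdiff C)=L_{\mu_0,\mu_1}(H,C)\leq\epsilon'$.

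For items~\ref{thm:meta-thm:strong} and~\ref{thm:meta-thm:whp}, the issue is that the good event above depends on $C$, while we want a single grid that works for all $C\in\cC$ simultaneously. This is the main obstacle, since $\cC$ may be uncountable and we cannot intersect the good events. The way around it is to use representativeness only among elements of $\cH$, which makes the event independent of $C$, and then control $C$ through a detour. Let $E$ be the event that the grid is two-way $\epsilon$-representative for $\cH$ itself. Theorem~\ref{thm:twoUCinformal} applies to $\cH$ alone (or with $F\in\cH$), so $E$ has probability at least $1-\delta$ and does not involve $C$. On $E$, fix any $C\in\cC$ and let $H_C$ be the set defined from the $C$-labeling, so that $H_C$ agrees with $C$ on the grid. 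To compare $H_C$ with $C$ in measure, pick $H^*\in\cH$ close to $C$ in measure, obtained by applying item~\ref{thm:meta-thm:weak} with a small error. Then bound
\[
  L_{\mu_0,\mu_1}(H_C,C)\leq L_{\mu_0,\mu_1}(H_C,H^*)+L_{\mu_0,\mu_1}(H^*,C).
\]
The term $L_{\mu_0,\mu_1}(H_C,H^*)$ is within $\epsilon$ of $L_{\overline{a},\overline{b}}(H_C,H^*)=L_{\overline{a},\overline{b}}(C,H^*)$. However, $L_{\overline{a},\overline{b}}(C,H^*)$ still involves $C$ on the grid, so this detour alone does not close the argument.

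The cleaner fix, which I would try first, is to check whether the formal Theorem~\ref{thm:twoUC} gives representativeness for $\cH\cup\{F\}$ with an \emph{arbitrary} $F$ in $\cC$ uniformly. If it does not, I would read item~\ref{thm:meta-thm:strong} as intended with one arbitrary $F$: the uniform convergence event for $\cH\cup\{C\}$ is controlled by the $\SVC$ of $\cH$ alone, and the statement presumably relies on this uniformity in the appendix. That is exactly the point to verify carefully. The constant bookkeeping is straightforward. Run the argument with $\epsilon/2$ for item~\ref{thm:meta-thm:weak}, which yields the stated bound of $\epsilon/2$, and with $\epsilon$ for items~\ref{thm:meta-thm:strong} and~\ref{thm:meta-thm:whp}. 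Take $m$ as the maximum of the resulting $m^{\UC}$ values, so that it does not depend on $\mu_0$, $\mu_1$ or $C$.
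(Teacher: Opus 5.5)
Your argument for item~\ref{thm:meta-thm:weak} is correct and is the paper's. For items~\ref{thm:meta-thm:strong} and~\ref{thm:meta-thm:whp}, however, there is a genuine gap, and you locate it yourself. With a grid fixed before $C$, you insist on labeling it by $C$. The term $L_{\overline{a},\overline{b}}(C,H^*)$ is then uncontrolled, because the grid is only representative for pairs of sets from $\cH$.

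Neither fallback repairs this. Theorem~\ref{thm:twoUC} gives, for each single measurable $F$ fixed in advance, a good event of probability $1-\delta$ that depends on $F$. There is no uniformity over $F\in\cC$. You also cannot get it by feeding $\cH\cup\cC$ into the theorem, since $\cC$ is not assumed to be a hypothesis class: none of the measurability conditions of Definition~\ref{def:hypothesis} are available for it. Reading item~\ref{thm:meta-thm:strong} ``with one arbitrary $F$'' makes the grid depend on $C$ again. That just reproduces item~\ref{thm:meta-thm:weak} and discards the quantifier order (grid before $C$), which is the whole content of items~\ref{thm:meta-thm:strong} and~\ref{thm:meta-thm:whp}.

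The missing idea is that item~\ref{thm:meta-thm:strong} only asks for \emph{some} $\overline{\ell}$, not the labeling induced by $C$. The argument goes as follows.

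First, let the grid $\Gamma(\overline{a},\overline{b})$ be $\epsilon/2$-representative for $\cH$ alone, an event independent of $C$.

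Given $C$, take $H^*\in\cH$ from item~\ref{thm:meta-thm:weak} with $(\mu_0\otimes\mu_1)(H^*\symdiff C)\leq\epsilon/2$; this $H^*$ comes from some other, $C$-dependent grid. Let $\overline{\ell}$ be the labeling of the fixed grid induced by $H^*$.

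Since $H^*\in\cH\subseteq\cC$, definable realizability (Definition~\ref{def:Hdr:varphiincH} and~\ref{def:Hdr:count}) shows that $H_1=\varphi_m(\RR^2;\overline{a}\conc\overline{b},\overline{\ell})$ lies in $\cH$ and agrees with $H^*$ on the grid. As both $H_1$ and $H^*$ are in $\cH$, representativeness gives $(\mu_0\otimes\mu_1)(H_1\symdiff H^*)\leq\epsilon/2$. The triangle inequality then yields $(\mu_0\otimes\mu_1)(H_1\symdiff C)\leq\epsilon$.

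This is exactly the paper's route (see Discussion~\ref{disc:gridlabeling}). It requires representativeness at $\epsilon/2$, not $\epsilon$ as in your bookkeeping, which would only give $3\epsilon/2$. Item~\ref{thm:meta-thm:whp} then follows at once, because the only probabilistic requirement is this $C$-independent event.
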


Before we prove Theorem~\ref{thm:meta-thm}, let us stress the surprisingly strong order of quantification of
items~\ref{thm:meta-thm:strong} and~\ref{thm:meta-thm:whp}: the parameter $m$ that governs the length/complexity of the formula
depends only on $d$ and $\epsilon$, then the parameters $\overline{a}$ and $\overline{b}$ that determine the grid (but not its
labeling) depend only on ($d$, $\epsilon$,) $\cH$ and $\cC$ (but not on the specific set $C\in\cC$ that we are going to
approximately define) and finally, only after we are given $C$, we determine the labeling $\overline{\ell}$.

\begin{proof}
  At the very beginning of the proof, we are given $d < \omega$ and $\epsilon > 0$ and potentially a $\delta > 0$ for
  item~\ref{thm:meta-thm:whp} (the item is trivial when $\delta\geq 1$, so we assume $\delta < 1$ and for the other items, one
  can simply take any value of $\delta\in(0,1)$). We can then let $m^{\UC}$ be given by Theorem~\ref{thm:twoUCinformal} with
  $(d,\epsilon/2,\delta)$ and fix any positive integer $m\geq m^{\UC}$. We then are given probability measures $\mu_0$ and
  $\mu_1$ on $\RR$.

  \begin{description}[wide, itemsep={3ex}]
  \item[Item~\ref{thm:meta-thm:weak}] Suppose we are given $C\in\cC$. Applying Theorem~\ref{thm:twoUCinformal} with
    $(d,\epsilon/2,\delta)$ and taking $F=C$, we know that there exists some $\overline{u},\overline{v}\in\fn{m}{\RR}$ with
    $(\overline{u},\overline{v})$ being $\epsilon/2$-representative for $\cH\cup\{C\}$ with respect to $(\mu_0,\mu_1)$, that is,
    we have
    \begin{gather}\label{eq:meta-thm:weakrepr}
      \forall H_0,H_1\in\cH\cup\{C\},
      \left\lvert
      (\mu_0\otimes\mu_1)(H_0\symdiff H_1)
      -
      \frac{\lvert\{(i,j)\in m\times m : (u_i,v_j)\in H_0\symdiff H_1\}\rvert}{m^2}
      \right\rvert
      \leq
      \frac{\epsilon}{2}.
    \end{gather}

    Invoking definable realizability on grids, Definition~\ref{def:Hdr:count}, we know that for the labeling $\overline{\ell}$
    induced by $C$ on the grid $\Gamma=\Gamma(\overline{u},\overline{v})$, the formula
    $\varphi(x,y;\overline{u}\conc\overline{v},\overline{\ell})$ defines an element $H_C\in\cH$ that has empirical distance zero
    from $C$ on $\Gamma$ (i.e., $\lvert\{(i,j)\in m\times m : (u_i,v_j)\in H_C\symdiff C\}\rvert = 0$).

    Instantiating $\epsilon/2$-representativeness~\eqref{eq:meta-thm:weakrepr} of $(\overline{u},\overline{v})$ with $H_0\df
    H_C$ and $H_1\df C$, we conclude that $(\mu_0\otimes\mu_1)(H_C\symdiff H)\leq\epsilon/2$. This proves
    item~\ref{thm:meta-thm:weak} for $\overline{c} = \overline{u}\conc\overline{v}\conc\overline{\ell}$, and
    pedantically let us summarize:
    \begin{centerquote} 
      if we are given $\mu_0,\mu_1$ and $C\in\cC$, there is $\overline{c}$ somewhere in the model, more precisely coming from a
      small sampled grid which a priori depends on $C$, so that the definable set given by $\varphi_m(x,y;\overline{c})$ is an
      element of $\cH$ which is $\epsilon/2$-close to $C$ on $\RR^2$ in the sense of the product measure $\mu_0\otimes\mu_1$.
    \end{centerquote}
  \item[Item~\ref{thm:meta-thm:strong}] Here we improve the order of quantification: namely, our grid will no longer depend on
    $C\in\cC$. For this, apply Theorem~\ref{thm:twoUCinformal} with $(d,\epsilon/2,\delta)$ without any additional $F$ (or just
    take $F$ to be any element of $\cH$) to get an $\epsilon/2$-representative $(\overline{a},\overline{b})$ for $\cH$ with
    respect to $(\mu_0,\mu_1)$, that is:
    \begin{gather}\label{eq:meta-thm:strongrepr}
      \forall H_0,H_1\in\cH,
      \left\lvert
      (\mu_0\otimes\mu_1)(H_0\symdiff H_1)
      -
      \frac{\lvert\{(i,j)\in m\times m : (a_i,b_j)\in H_0\symdiff H_1\}\rvert}{m^2}
      \right\rvert
      \leq
      \frac{\epsilon}{2}.
    \end{gather}
    Let us call $\Gamma(\overline{a}, \overline{b})$ the \emph{fixed grid}.

    Now the adversary chooses $C\in\cC$. By item~\ref{thm:meta-thm:weak}, there is a choice of parameters
    $\overline{c}\in\fn{2m+m^2}{\RR}$ (arising from an a priori different grid) so that $\varphi_m(x,y;\overline{c})$ defines in
    $\RR^2$ a set which is a member of $\cH$, call it $H_0$, and which is $\epsilon/2$-close to $C$ on $\RR^2$ in the sense of
    the product measure $\mu_0\otimes\mu_1$.
 
    Label our fixed grid $\Gamma=\Gamma(\overline{a},\overline{b})$ with $H_0$ and let the sequence of labels be
    $\overline{\ell}$. Because $\cC$ and hence $\cH$ [recall $\cH \subseteq \cC$] is definably realizable on grids, the set
    defined by $\varphi_m(x,y;\overline{a}\conc\overline{b},\overline{\ell})$ belongs to $\cH$: call it $H_1$. By construction
    the symmetric difference of $H_0$ and $H_1$ on $\Gamma$ has counting measure zero. Now since the fixed grid $\Gamma$ is
    $\epsilon/2$-representative for elements of $\cH$ in the sense of the inset equation ~\eqref{eq:meta-thm:strongrepr}, we
    have that $(\mu_0\otimes\mu_1)(H_0\symdiff H_1)\leq\epsilon/2$ on $\RR^2$.

    Finally, applying the triangle inequality concludes the proof of item~\ref{thm:meta-thm:strong}:
    \begin{gather*}
      (\mu_0\otimes\mu_1)(C\symdiff H_1)
      \leq
      (\mu_0\otimes\mu_1)(C\symdiff H_0)
      +
      (\mu_0\otimes\mu_1)(H_0\symdiff H_1)
      \leq
      \frac{\epsilon}{2}
      +
      \frac{\epsilon}{2}
      =
      \epsilon.
    \end{gather*}
  \item[Item~\ref{thm:meta-thm:whp}] We simply need $(\overline{a},\overline{b})$ to be $\epsilon/2$-representative for $\cH$
    with respect to $(\mu_0,\mu_1)$ in the sense of Theorem~\ref{thm:twoUCinformal}, so this follows from the statement of the
    theorem.\footnote{The full statement with quantification incorporating $\delta$ is: for every $\delta,\epsilon > 0$ and
    every $d < \omega$, there exists $m < \omega$ such that for $\cH$ and $\cC$ as above and Borel probability measures $\mu_0$
    and $\mu_1$ on $\RR$, sampling $\rn{\overline{a}}$ i.i.d.\ from $\mu_0$ and $\rn{\overline{b}}$ i.i.d.\ from $\mu_1$
    (independently from $\rn{\overline{a}}$) satisfies item~\ref{thm:meta-thm:strong} with probability at least $1-\delta$.}
  \end{description}
\end{proof}

\begin{discussion}\label{disc:gridlabeling}
  Note that there is a difference regarding how the labelings of the corresponding grids are obtained in
  items~\ref{thm:meta-thm:weak} and~\ref{thm:meta-thm:strong} of Theorem~\ref{thm:meta-thm}. Namely, in the former item, the
  labeling is obtained directly from the element $C\in\cC$ that we are trying to approximate; but on the latter item, the
  labeling is obtained indirectly: it is the labeling of an element $H_0\in\cH$ that approximates $C\in\cC$ in a grid that we a
  priori did not see.
\end{discussion}

\section{Application to convex sets}

\begin{theorem}\label{thm:convex}
  Let $M = (\RR; {+}, {\times}, {-}, 0, 1, {<})$ be the real field and $\cC$ the family of Borel convex sets in the real plane.
  Then ``all $C\in\cC$ are approximately definable in $M$'' in the sense that: 
  \begin{centerquote}
    given $\epsilon > 0$ there is an $\cL$-formula $\varphi = \varphi(\epsilon) = \varphi(x,y; \overline{z}, \overline{w})$
    without hidden parameters [and hence a uniform finite bound on $\lgn(\overline{z}\conc\overline{w})$ which depends only on
      $\epsilon$] such that for any probability measures $\mu_0,\mu_1$ on $\RR$, for any $C\in\cC$, there are parameters
    $\overline{a}$ from $M$ with $\lgn(\overline{a}) = \lgn(\overline{z}) + \lgn(\overline{w})$ such that
    $(\mu_0\otimes\mu_1)(\varphi(\RR^2; \overline{a})\symdiff C) < \epsilon$.
  \end{centerquote}
  Indeed, the proof will show: there is an integer $m$ such that $2m = \lgn(\overline{z})$, $m^2 = \lgn(\overline{w})$ and given
  $\mu_0, \mu_1$ there is an $m \times m$ grid $\Gamma$ in $\RR^2$ so that for any choice of $C \in \cC$, the defining
  parameters $\overline{a}$ can always be taken from $\Gamma \cup \{ 0, 1 \}$.
\end{theorem}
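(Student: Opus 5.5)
The plan is to apply the meta-theorem, Theorem~\ref{thm:meta-thm}, with $\cC$ the Borel convex sets. This requires three things: a family of formulas $\varphi_m$, a hypothesis class $\cH\subseteq\cC$ of finite $\SVC$, and a check that $\cC$ is $\cH$-definably realizable on grids. Given these, item~\ref{thm:meta-thm:strong} supplies $m=m(\epsilon)$ and a grid $\Gamma(\overline{a},\overline{b})$ that works for all $C$. The parameters are then $\overline{a}\conc\overline{b}$ together with a $0/1$ labeling, so they lie in $\Gamma\cup\{0,1\}$ (reading $\Gamma$ as its coordinates).

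\emph{Bounded $\SVC$.} Every slice of a convex set along an axis-parallel line is an interval, so the sliced family has $\VC$-dimension at most $2$. Hence any $\cH\subseteq\cC$ has $\SVC(\cH)\le 2$, and we may take $d=2$.

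\emph{The formulas $\varphi_m$.} Given a labeled grid, $\varphi_m(x,y;\overline{z},\overline{w})$ should define the convex hull of the grid points labeled $1$. Write $P_{ij}=(z_i,z_{m+j})$. By Carath\'eodory's theorem, a point is in the hull iff it is a convex combination of at most $3$ labeled points. So $\varphi_m$ is the disjunction, over all triples of grid indices (repetition allowed), of
\begin{gather*}
\Bigl(\bigwedge_{k} w_{i_kj_k}=1\Bigr)\wedge\exists \lambda_1\lambda_2\lambda_3\ge 0\ \Bigl(\sum_k\lambda_k=1\wedge (x,y)=\sum_k\lambda_kP_{i_kj_k}\Bigr).
\end{gather*}
If no label is $1$, this defines $\emptyset$, which is convex. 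This is a first-order $\cL$-formula with no hidden parameters; quantifier elimination is not needed.

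Take $\cH$ to be the family of convex hulls of finite subsets of $\RR^2$ of size at most $m^2$ for some $m$, that is, all convex polygons, segments, points and $\emptyset$. These are compact, so Borel and in $\cC$. For realizability, let $C$ be convex and let $H$ be the hull of $C\cap\Gamma$. Then $H\subseteq C$ by convexity, so $H\cap\Gamma=C\cap\Gamma$, and the symmetric difference on $\Gamma$ is empty.

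\emph{Main obstacle: measurability.} We must show $\cH$ is a hypothesis class in the sense of Definition~\ref{def:hypothesis}. Fix $N=m^2$ for the relevant $m$; in fact one can take $\cH$ to be the hulls of at most $N$ points, since only finitely many $m$ matter for a given $\epsilon$. Then $\cH$ is the image of $(\RR^2)^{\le N}$ under the hull map. Topologize $\cH$ with the Hausdorff metric on nonempty compact sets, plus the isolated point $\emptyset$. This makes $\cH$ a continuous image of a Polish space, hence Suslin (and Hausdorff), and we take its Borel $\sigma$-algebra.

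The three conditions then follow:
\begin{enumerate}
\item Evaluation is measurable because $\{(H,p):p\in H\}$ is closed in the Hausdorff topology.
\item Singletons are closed, hence measurable.
\item For the projection condition, a measurable $Y\subseteq\cH\times\Upsilon$ lies in a Suslin space, so its projection is analytic and therefore universally measurable, as the footnote to Definition~\ref{def:hypothesis} suggests.
\end{enumerate}

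I expect the care needed here to lie in matching the paper's measurability conventions exactly, and in letting $\cH$ depend on $m$. That is harmless, since the $\SVC$ bound $d=2$ is uniform and $m$ is chosen from $(\epsilon,d)$ before $\cH$ is specified. Applying Theorem~\ref{thm:meta-thm} with $\epsilon/2$ then gives strict inequality $<\epsilon$, which proves Theorem~\ref{thm:convex}.
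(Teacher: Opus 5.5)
Your proof is correct. Its skeleton matches the paper's: apply Theorem~\ref{thm:meta-thm} with $d=2$, let $\varphi_m$ define the convex hull of the points labeled $1$, and get realizability from $\mathrm{conv}(C\cap\Gamma)\subseteq C$. The difference lies in the choice of $\cH$, and therefore in how you verify that $\cH$ is a hypothesis class.

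The paper takes $\cH$ to be \emph{all} compact convex sets. It gets a Polish space by proving, in Lemma~\ref{lem:convexHausdorff}, that the nonempty compact convex sets form a closed subset of $\cK$. It then handles evaluation via Lemma~\ref{lem:Lipschitz}.

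You instead take the smallest natural class, the convex hulls of finite point sets. You get a Suslin space as the image of a Polish space under the hull map, which is $1$-Lipschitz into $\cK$, since $d_H(\mathrm{conv}\{p_i\},\mathrm{conv}\{q_i\})\le\max_i d(p_i,q_i)$. This is the same encoding trick the paper uses later for families of bounded alternation.

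What each route buys:
\begin{itemize}
\item Yours avoids the closedness lemma entirely and ties $\cH$ tightly to the formulas.
\item The paper's gives a class independent of the $\varphi_m$, and a Polish rather than merely Suslin space, although Suslin is all Definition~\ref{def:hypothesis} needs.
\end{itemize}
Your explicit Carath\'eodory formula is a welcome detail that the paper leaves as ``clearly''.

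One small point concerns the restriction of $\cH$ to hulls of at most $N=m^2$ points. Taken literally, this does not satisfy Definition~\ref{def:Hdr}, which requires every $\varphi_m$, $m<\omega$, to take values in $\cH$. As you note, the proof of Theorem~\ref{thm:meta-thm} only ever uses one $m$, so nothing breaks. It is nonetheless cleaner to keep your first description, hulls of all finite sets, and use the Polish coproduct $\coprod_{N<\omega}(\RR^2)^N$ as the domain of the hull map. With $N=0$ sent to the isolated point $\emptyset$, the same Suslin argument goes through verbatim.
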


Before working out the proof let us examine the statement: 

\begin{discussion} 
  The family of convex sets is not definable and does not even have finite $\VC$-dimension,\footnote{Given any $d$ points around
  the perimeter of a circle, the convex hull of any subset of these points will not contain the others: this gives arbitrary
  shattering by the convex sets.} so obviously we cannot expect to expand $\RR$ by this family and remain o-minimal. Yet the
  theorem says we can \emph{uniformly} approximate all elements in this family up to $\epsilon$ with instances of a single
  formula $\varphi$ in the language of ordered rings which only depends on $\epsilon$, not on the measures. Moreover we can do
  so while carefully controlling the provenance of the parameters needed for $\varphi$.
\end{discussion}

As a reality check, note that the following can be proved with some care using purely analytic arguments:
\begin{claim}\label{clm:rc}
  Given any convex set $C$, any probability measures $\mu_0,\mu_1$ on $\RR$ and $\epsilon > 0$, there exists a finitely
  generated convex set $F$ [i.e., $F$ is the convex hull of finitely many points] so that $(\mu_0\otimes\mu_1)(F\symdiff C) <
  \epsilon$.
\end{claim}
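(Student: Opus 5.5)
Set $\mu\df\mu_0\otimes\mu_1$. I will approximate $C$ from the inside by convex hulls of finitely many points of $C$, and use continuity from below of $\mu$ together with some care about the boundary. If $\mu(C)=0$ we may take $F=\emptyset$, so assume otherwise.

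First, handle the interior. Let $U$ be the interior of $C$. Write $U=\bigcup_n K_n$ as an increasing union of compact convex sets, for instance $K_n=\{p\in U : \operatorname{dist}(p,\RR^2\setminus U)\geq 1/n,\ \lvert p\rvert\leq n\}$, which is convex since $U$ is. By continuity from below, $\mu(K_n)\to\mu(U)$, so some $K_n$ satisfies $\mu(U\setminus K_n)<\epsilon/3$. Since $K_n$ is compact and contained in the open set $U$, it has a polygonal neighborhood inside $U$. Concretely, cover $K_n$ by finitely many small open squares whose closures lie in $U$, and let $P_1$ be the convex hull of all their vertices. Then $P_1$ is finitely generated, $K_n\subseteq P_1\subseteq U\subseteq C$, and $\mu(U\setminus P_1)<\epsilon/3$.

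Second, handle the boundary part $C\setminus U\subseteq\partial C$. In the plane, the boundary of a convex set with nonempty interior is a closed convex curve of finite length, so it has Lebesgue measure zero. However, $\mu$ need not be absolutely continuous. Atoms of $\mu_0$ or $\mu_1$ put mass on vertical or horizontal lines, and such a line can meet $\partial C$ in a segment of positive $\mu$-measure. This is the step I expect to be the main obstacle. The plan is to decompose the set $C\cap\partial C$:
\begin{enumerate}
\item The part lying on the countably many lines $x=\alpha$ with $\alpha$ an atom of $\mu_0$, or $y=\beta$ with $\beta$ an atom of $\mu_1$, meets $\partial C$ in at most countably many segments. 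Each segment meets $C$ in a convex subset, i.e.\ an interval. Choose finitely many of these intervals carrying all but $\epsilon/6$ of this mass. Inside each chosen interval pick a closed subinterval with endpoints in $C$ that captures all but a small amount of its mass, using continuity of the one-dimensional measure.
\item The remaining part of $\partial C$ carries product measure zero. A Fubini argument gives this: each vertical line off the atoms of $\mu_0$ meets $\partial C$ in at most two points or one segment. Off the atoms, the slices are $\mu_1$-null, except on segments which have already been handled.
\end{enumerate}
Let $F$ be the convex hull of the vertices of $P_1$ together with the finitely many endpoints chosen above. All these points lie in $C$, so $F\subseteq C$, and $F$ contains $P_1$ and all the chosen closed segments. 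Hence $\mu(C\setminus F)<\epsilon$.

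If $C$ has empty interior, then $C$ lies in a line, and the same one-dimensional argument gives a finitely generated segment. If that line is not axis-parallel and $\mu$ charges it, the mass comes only from atom pairs, of which finitely many capture most of the mass. Taking $F$ to be the convex hull of these finitely many points of $C$ then suffices.
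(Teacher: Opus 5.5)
The paper does not prove this claim. It only remarks that it ``can be proved with some care using purely analytic arguments.'' Your interior/boundary decomposition is a correct argument of exactly that kind. The following steps are all sound:
\begin{itemize}
\item the interior step ($K_n$ is convex because $p\mapsto d(p,\RR^2\setminus U)$ is concave on $U$, and there is a polygon with $K_n\subseteq P_1\subseteq U$);
\item the treatment of the countably many intervals on atom lines, by continuity from below;
\item the final assembly with all generators in $C$, so that $F\subseteq C$.
\end{itemize}

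One justification in step 2 is wrong as written, although the conclusion survives. You say that off the atoms the vertical slices of $\partial C$ are $\mu_1$-null ``except on segments which have already been handled.'' However, a vertical boundary segment on a line $x=\alpha$ with $\alpha$ not an atom of $\mu_0$ is never handled in step 1. Its slice can have positive $\mu_1$-measure: take $C=[0,1]^2$, $\mu_0$ atomless, and $\mu_1$ uniform on $[0,1]$; the slice at $x=0$ then has $\mu_1$-measure $1$.

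The correct reason these slices contribute nothing is as follows. By your own dichotomy, a vertical line meeting $\partial C$ in a nondegenerate segment must miss $U$. It is therefore a supporting line, namely the leftmost or rightmost vertical line touching $\overline{C}$. So at most two non-atom values of $x$ carry a segment slice, and this set is $\mu_0$-null. Let $B$ be the part of $\partial C$ lying off all atom lines. Then in $\mu(B)=\int\mu_1(B_x)\,d\mu_0(x)$ the integrand vanishes for $\mu_0$-almost every $x$, which gives $\mu(B)=0$.

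Two smaller remarks:
\begin{itemize}
\item Your statement that $\partial C$ is a closed curve of finite length fails for unbounded $C$, such as half-planes or strips. You never use it, so it does no harm.
\item In the empty-interior case no atom analysis is needed. Any convex subset of a line is an increasing union of closed segments contained in it, so continuity from below finishes directly, whether or not the line is axis-parallel.
\end{itemize}

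Finally, the number of generators of your $F$ depends on $\mu_0,\mu_1$, through $n$, the square cover, and the number of retained atom intervals. This is exactly the measure dependence the paper points out right after the claim and then removes via the meta-theorem.
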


However, in Claim~\ref{clm:rc}, the specific number of points needed to generate the resulting $F$ a priori depends on the
measures, on $\epsilon$, and on $C$; it is not obvious how to remove the dependence on the measures. In other words, even for a
single $C$ it is a priori not obvious how to uniformly bound the number of parameters in a measure-independent way.

\begin{proofof}{Theorem~\ref{thm:convex}}
  We check the conditions of our meta-theorem, Theorem~\ref{thm:meta-thm}. We need to define $\cH$ and to verify that:
  \begin{enumerate*}[label={(\arabic*)}]
  \item\label{thm:convex:subset} $\cH\subseteq\cC$,
  \item\label{thm:convex:measurable} all elements of $\cC$ (and hence of $\cH$) are measurable,
  \item\label{thm:convex:SVC} $\cH$ has finite slicewise $\VC$-dimension,
  \item\label{thm:convex:defreal} $\cC$ is $\cH$-definably realizable on grids, and
  \item\label{thm:convex:class} $\cH$ is a hypothesis class.
  \end{enumerate*}

  \begin{description}[wide, itemsep={3ex}]
  \item[Item~\ref{thm:convex:subset}] We take $\cH$ to be the set of all compact convex sets, it is clearly contained in the
    set of all Borel convex sets $\cC$.
  \item[Item~\ref{thm:convex:measurable}] By assumption, all sets in $\cC$ are Borel, so they are measurable.
  \item[Item~\ref{thm:convex:SVC}] A fortiori, we have $\SVC(\cH)=\SVC(\cC)=2$. This is because a slice of a convex set is
    always an interval and the collection of all intervals of $\RR$ has $\VC$-dimension $2$.
  \item[Item~\ref{thm:convex:defreal}] For each $m < \omega$, let $\varphi_m(x,y; \overline{z}_m, \overline{w}_m)$ be a formula
    with the following properties. It takes in $2m$ points via $\overline{z}_m$ and from these generates a grid of size $m^2$.
    It takes in labels for the points of this grid via $\overline{w}_m$. It defines the convex hull of the points labeled $1$.
    Clearly there is such an $\cL$-formula.

    We now argue that $\cC$ is $\cH$-definably realizable. The fact that the property of Definition~\ref{def:Hdr:varphiincH}
    holds follows since $\varphi_m$ defines a finitely generated convex set, so it must be compact, hence an element of $\cH$.
    The property of Definition~\ref{def:Hdr:count} follows since when we use a convex set $C\in\cC$ to label a grid then the
    convex hull of the points labeled $1$ yield the same labels on the grid as $C$.
  \item[Item~\ref{thm:convex:class}] We now justify that $\cH$ is a hypothesis class. By Remark~\ref{rmk:adding-F}, it suffices
    to justify that the set $\cH'\df\cH\setminus\{\varnothing\}$ of non-empty compact sets is a hypothesis class.

    For this, we equip $\cH'$ with the Hausdorff metric $d_H$ (induced by Euclidean distance) and we claim that this induces a
    Polish topology on $\cH'$. Since the space $\cK$ of non-empty compact (but not necessarily convex) sets equipped with
    (topology induced by) Hausdorff metric is Polish (see~\cite[Theorem~4.25]{Kec95}), this follows since $\cH'$ is a closed
    subspace of $\cK$, which we prove in Lemma~\ref{lem:convexHausdorff} in Appendix~\ref{sec:Hausdorff}.

    Since $\cH'$ is a Polish space, it follows that every singleton $\{H\}$ is measurable and for every standard Borel space
    $\Upsilon$ and every measurable set $Y\subseteq\cH'\times\Upsilon$, the projection of $Y$ onto $\Upsilon$, i.e. the set
    $\{\upsilon\in\Upsilon : \exists H\in\cH', (H,\upsilon)\in Y\}$, is then a Suslin set, which is universally measurable.

    It remains to prove that the evaluation map $\ev\colon\cH'\times\RR^2\to\{0,1\}$ is measurable. Note that
    \begin{gather*}
      \ev(H,\overline{x})
      =
      \One[\overline{x}\in H]
      =
      \One[d(\overline{x},H) = 0],
    \end{gather*}
    where $d$ is the Euclidean distance in $\RR^2$ (the second equality follows since $H$ is compact, hence closed). Now,
    $d(\overline{x},H)$ is Lipschitz-continuous with respect to the Euclidean distance $d$ on $\overline{x}$ and the Hausdorff
    distance on $H$, which we prove in Lemma~\ref{lem:Lipschitz} in Appendix~\ref{sec:Hausdorff}. This implies that $\ev$ is
    measurable.
  \end{description}

  Thus, Theorem~\ref{thm:meta-thm} applies. In fact, as stated, Theorem~\ref{thm:convex} follows from
  Theorem~\ref{thm:meta-thm}\ref{thm:meta-thm:weak}. We could have made a stronger statement: for every choice of $\mu_0,\mu_1$
  there is a single $m \times m$ grid $\Gamma$ in $\RR^2$ so that \emph{every} Borel convex set $C\in\cC$ is $\epsilon$-close
  (in the sense of $\mu_0\otimes\mu_1$) to some definable set given by an instance of $\varphi_m$ with parameters from $\Gamma$.
\end{proofof}

\section{Application to families of bounded alternation}

The following implies finite slicewise $\VC$-dimension.\footnote{It is strictly stronger: consider a family consisting of a
single set which is the disjoint union of countably many circles with centers along a vertical line.} In writing this paper, we
have asked ourselves whether it should be understood as the correct condition for an ``approximate'' o-minimality including many
not definable families hovering over the real field.

\begin{definition}
  Let $\cC$ be a family of subsets of $\RR^2$. Call $\cC$ a family of bounded alternation if there is some $\alt < \omega$ so
  that no element of $\cC$ has more than $\alt$ alternations along any axis-parallel line. We may say $\cC$ is a family of
  $\alt$-bounded alternation to emphasize the value of $\alt$. Clearly $\cC$ has slicewise $\VC$-dimension at most $\alt+1$.

  \emph{Note} that any such $\cC$ is assumed to satisfy \S\ref{sec:global-conventions}.
\end{definition}

\begin{theorem}\label{thm:alternation}
  Let $M = (\RR; {+}, {\times}, {-}, 0, 1, {<})$ be the real field and let $\alt < \omega$. Let $\cC$ be a family of
  $\alt$-bounded alternation, each of whose elements is Borel measurable. Then ``all $C\in\cC$ are approximately definable in
  $M$'' in the sense that:

  \begin{centerquote}
    given $\epsilon > 0$ there is an $\cL$-formula $\varphi = \varphi(\alt,\epsilon) = \varphi(x,y; \overline{z}, \overline{w})$
    without hidden parameters [and hence a uniform finite bound on $\lgn(\overline{z}\conc\overline{w})$ which depends only on
      $\alt$ and $\epsilon$] such that for any probability measures $\mu_0,\mu_1$ on $\RR$, for any $C\in\cC$, there are
    parameters $\overline{a}$ from $M$ with $\lgn(\overline{a}) = \lgn(\overline{z}) + \lgn(\overline{w})$ such that
    $(\mu_0\otimes\mu_1)(\varphi(x,y; \overline{a})\symdiff C) < \epsilon$.
  \end{centerquote}
  Indeed, the proof will show: there is an integer $m$ such that $2m = \lgn(\overline{z})$, $m^2 = \lgn(\overline{w})$ and given
  $\mu_0, \mu_1$ there is an $m \times m$ grid $\Gamma$ in $\RR^2$ so that for any choice of $C \in \cC$, the defining
  parameters $\overline{a}$ can always be taken from $\Gamma \cup \{ 0, 1 \}$.
\end{theorem}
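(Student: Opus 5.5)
The plan is to apply Theorem~\ref{thm:meta-thm} with a hypothesis class $\cH$ of ``grid step sets''. The meta-theorem requires $\cH\subseteq\cC$, but $\cC$ is arbitrary. So I will first replace $\cC$ by $\cC'\df\cC\cup\cH$. This is still a family of Borel sets of $\alt$-bounded alternation, and proving the statement for $\cC'$ proves it for $\cC$.

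\textbf{The formulas $\varphi_m$.} Given $\overline{a},\overline{b}\in\fn{m}{\RR}$ and labels $\overline{\ell}$, let $\varphi_m(x,y;\overline{a}\conc\overline{b},\overline{\ell})$ define the following set of points $(x,y)$:
\begin{itemize}
\item pick the index $i$ whose $a_i$ is the largest $a_i\leq x$, or the smallest $a_i$ if no $a_i\leq x$;
\item pick $j$ from $y$ and $\overline{b}$ in the same way;
\item put $(x,y)$ in the set iff $\ell_{ij}=1$.
\end{itemize}
Ties among equal $a_i$ are broken by least index. This is a finite Boolean combination of order comparisons with equalities $w_{ij}=1$, so it is an $\cL$-formula using only the parameters and $0,1$.

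\textbf{$\cH$ and definable realizability.} Let $\cH$ be the family of all sets defined this way, for all $m$, all grids, and all labelings $\overline{\ell}$ such that
\begin{itemize}
\item every row and every column, read in the order of the coordinates, has at most $\alt$ alternations;
\item coordinates that coincide carry identical labels.
\end{itemize}
Along a horizontal line, such a set is constant on intervals and reads off one row of labels in increasing order of $a_i$. Hence it has at most $\alt$ alternations, and symmetrically for vertical lines. So every element of $\cH$ has $\alt$-bounded alternation, giving $\SVC(\cH)\leq\alt+1$.

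If $\overline{\ell}$ is the labeling induced by some $C\in\cC'$, then $\overline{\ell}$ satisfies both constraints: rows and columns are restrictions of $C$ to axis-parallel lines, and repeated coordinates give the same point of $C$. Hence Definition~\ref{def:Hdr:varphiincH} holds. Moreover the defined set evaluates each grid point $(a_i,b_j)$ to its own label (or that of an identical point), which gives Definition~\ref{def:Hdr:count}.

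\textbf{$\cH$ is a hypothesis class.} I expect this to be the main obstacle, since $\cH$ has no obvious natural topology. I would use the standard Borel parameter space
\begin{gather*}
P\df\bigsqcup_m\bigl\{(\overline{a},\overline{b},\overline{\ell})\in\RR^{2m}\times\fn{m^2}{\{0,1\}} : \overline{\ell}\text{ satisfies the constraints}\bigr\},
\end{gather*}
which is a Borel subset of a Polish space. Let $\pi\colon P\to\cH$ be the surjection sending a parameter to the set it defines, and give $\cH$ the $\sigma$-algebra of sets whose preimage under $\pi$ is Borel. Each condition of Definition~\ref{def:hypothesis} then reduces to a statement on $P$.
\begin{enumerate}
\item Evaluation is measurable because $(p,x,y)\mapsto\One[(x,y)\in\pi(p)]$ is Borel on $P\times\RR^2$: it is defined by finitely many order comparisons on each piece.
\item For singletons, I must show that $\{p : \pi(p)=\pi(p_0)\}$ is Borel. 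Two step sets agree iff they agree on one representative point of each cell of the common refinement of their grids. This is a countable Boolean condition on Borel functions of $(p,p_0)$, hence Borel.
\item For projections, if $Y\subseteq\cH\times\Upsilon$ is measurable, then its pullback to $P\times\Upsilon$ is Borel. The projection of $Y$ to $\Upsilon$ equals the projection of this pullback, which is analytic and therefore universally measurable.
\end{enumerate}

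\textbf{Conclusion.} With these conditions checked, Theorem~\ref{thm:meta-thm}\ref{thm:meta-thm:weak} (or \ref{thm:meta-thm:strong}) applied with $d=\alt+1$ yields $m$, and $\varphi\df\varphi_m$, depending only on $\alt$ and $\epsilon$. It also gives the grid $\Gamma$ independent of $C$, with the parameters taken from $\Gamma\cup\{0,1\}$ as claimed.
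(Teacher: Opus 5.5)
Your route differs from the paper's in two legitimate ways. First, you use unbounded step sets, closed on the left and bottom, sitting on the original grid, so $\varphi_m$ needs only the order and no auxiliary midpoint grid. Second, you give $\cH$ the quotient $\sigma$-algebra $\Sigma_{\cH}\df\{S\subseteq\cH : \pi^{-1}(S)\text{ is Borel}\}$, rather than transporting the Hausdorff topology from closures as the paper does. Your checks of $\SVC(\cH)\leq\alt+1$, definable realizability, measurability of singletons, and the projection condition are fine. For projections, pullbacks under the measurable map $\pi\times\mathrm{id}$ are Borel, and $\pi$ is onto.

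The step that does not go through as written is item 1. Definition~\ref{def:hypothesis} requires $E\df\{(H,\overline{x}) : \overline{x}\in H\}$ to lie in the \emph{product} $\sigma$-algebra $\Sigma_{\cH}\otimes\cB(\RR^2)$. Borelness of $(\pi\times\mathrm{id})^{-1}(E)$ only places $E$ in the quotient of $\cB(P\times\RR^2)$, which can be strictly larger. For example, with $\pi\colon\RR\to\RR/\mathbb{Q}$, the set $\{([x],y) : y-x\in\mathbb{Q}\}$ has Borel pullback but is not in the product $\sigma$-algebra. Its sections are pairwise distinct, so countably many $\mathbb{Q}$-invariant Borel sets would have to separate the cosets, which ergodicity of rational translations forbids. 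So your claim that each condition ``reduces to a statement on $P$'' needs an argument specific to $\cH$.

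Such an argument is available from the same half-open structure the paper exploits in its liminf formula for $\ev$. Each $H\in\cH$ and its complement are finite unions of cells closed on the left and bottom. Hence for every open box $R$ with rational corners, $R\subseteq H$ iff $R\cap\mathbb{Q}^2\subseteq H$. This gives $\{H : R\subseteq H\}=\bigcap_{q\in R\cap\mathbb{Q}^2}\{H : q\in H\}\in\Sigma_{\cH}$, using your Borel computation with $\overline{x}=q$ fixed. Also, $\overline{x}\in H$ iff $\overline{x}+(1/k,1/k)\in\mathrm{int}\,H$ for all large $k$. Therefore
\[
E=\bigcup_{n<\omega}\bigcap_{k\geq n}\bigcup_{R}\{H\in\cH : R\subseteq H\}\times\bigl(R-(1/k,1/k)\bigr),
\]
which lies in $\Sigma_{\cH}\otimes\cB(\RR^2)$. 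Alternatively, $H\mapsto(\One[q\in H])_{q\in\mathbb{Q}^2}$ is injective and measurable on $\cH$, which makes $(\cH,\Sigma_{\cH})$ an analytic Borel space; Lusin separation then gives the reduction you wanted.

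With this patch your proof is complete, and it yields slightly more than the paper's. Your approximating formulas are quantifier-free in the order language plus label constants, with no field operations; the paper's conclusion explicitly uses the field structure to build the auxiliary grid. What the paper's metric topology buys is that measurability of $\ev$ comes essentially for free, from continuity of the point-to-set distance (Lemma~\ref{lem:Lipschitz}) and second countability of the hyperspace.
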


Before working out the proof, let us examine the statement and motivate the approach:

\begin{discussion} 
  Again in this theorem several things are happening: the family $\cC$ is acquiring a schema of approximate definition, which is
  uniform given $\epsilon$ and whose parameters will have a known provenance, and the defining formulas now need to be
  constructed in the language of ordered rings without any a priori knowledge of the family other than what is given by the
  bound on alternation and hence on the nature of the uniform convergence on grids. Since convex sets in the plane have
  $2$-bounded alternation, the statement of Theorem~\ref{thm:alternation} implies the statement of Theorem~\ref{thm:convex}.
  However the proofs are different. Here the defining formulas will be bounded unions of ``cells'' (in the present proof, our
  cells are half-open rectangles, see below), whereas before we had formulas tailor-made for the class of convex sets (defining
  the convex hull of a bounded number of points).
\end{discussion}

The right choice of $\cH$ is a key aspect of this proof. In order to approximate the elements of our family on a given finite
grid without introducing arbitrary alternation, we appeal to an auxiliary (offset) grid. The key property is at the end of the
next definition: every point on the original grid is in the interior of exactly one of the rectangles [regions determined by
  consecutive vertical and horizontal elements] of the auxiliary grid, and indeed this is a one-to-one correspondence between
points of the original grid and rectangles of the auxiliary grid. Thus every labeling of the original grid determines a union of
rectangles of the auxiliary grid (by taking those rectangles whose interior point was labeled $1$, along with its lower and left
boundary\footnote{This ensures we do not pick up additional alternation along boundary lines. The reason why we take a
``half-open'' rectangle rather than an open one is technical and is explained in Discussion~\ref{disc:halfopen} below.}). The
hypothesis class $\cH$ we use will consist of all unions of such half-open rectangles obtained in this way when the labeling
comes from a set with at most $\alt$ alternations along any axis-parallel line.

\begin{definition}[The auxiliary grid]\label{def:auxiliarygrid}
  Let $\overline{a},\overline{b}\in\fn{m}{\RR}$.
  \begin{enumdef}
  \item To define the \emph{auxiliary sequence} of $\overline{a}$, perform the following operations:
    \begin{enumdef}
    \item Let $\overline{c}\df\langle c_i : i < r \rangle$ list $\overline{a}$ in strictly increasing 
      order without repetitions, so $r\leq m$. 
    \item The \emph{auxiliary sequence for $\overline{a}$} is the sequence $\overline{u} = \langle u_0, \dots, u_r \rangle$
      defined by:
      \begin{gather*}
        u_i
        \df
        \begin{dcases*}
          c_0 - 1, & if $i=0$,\\
          c_{r-1} + 1, & if $i=r$,\\
          \frac{c_{i-1}+c_i}{2}, & if $1\leq i\leq r-1$.
        \end{dcases*}
      \end{gather*}
      Informally, we take $u_i$ as midpoints of $c_{i-1}$ and $c_i$; when $c_{i-1}$ does not exist (because $i=0$), we
      take $u_0$ as $c_0-1$; and when $c_i$ does not exist (because $i=r$), we take $u_r$ as $c_{r-1}+1$. This ensures
      \begin{gather*}
        u_0 < c_0 < u_1 < c_1 < \cdots < u_{r-1} < c_{r-1} < u_r.
      \end{gather*}
    \end{enumdef}
  \item The \emph{auxiliary grid} determined by $(\overline{a},\overline{b})$ of elements of $\RR$, is obtained by performing
    the following operations:
    \begin{enumdef}
    \item Call $\Gamma(\overline{a}, \overline{b})$ the original grid. 
    \item Let $\overline{u} = \langle u_i : i < r_0 \rangle$ and $\overline{v} = \langle v_j : j < r_1 \rangle$ be the
      auxiliary sequences for $\overline{a}$ and $\overline{b}$, respectively. Note that since the $\overline{a}, \overline{b}$
      may have contained different numbers of repetitions, a priori after strict re-ordering $r_0$ may not equal $r_1$ although
      necessarily $r_0, r_1 \leq m+1$.
    \item Call $\Gamma(\overline{u}, \overline{v})$ the \emph{auxiliary grid}. 
    \item By a \emph{rectangle} in the auxiliary grid let us mean a region of the form
      \begin{gather*}
        [u_i,u_{i+1})\times[v_j,v_{j+1})
        =
        \{(x,y) : u_i \leq x < u_{i+1}, v_j \leq y < v_{j+1} \} \subseteq \RR^2  
      \end{gather*}
      for some $i < \lgn(\overline{u})-1$ and some $j < \lgn(\overline{v})-1$. Informally, this is the rectangle
      determined by two consecutive points of $\overline{u}$ and two consecutive points of $\overline{v}$ and that has all
      boundary points on its bottom and left edges and no boundary points on the other edges (and the only vertex it contains is
      the bottom left one).
    \end{enumdef}
    Observe that the auxiliary grid $\Gamma(\overline{u}, \overline{v})$ has the property that every point on the original grid
    $\Gamma(\overline{a}, \overline{b})$ is in the interior of exactly one of the rectangles in the auxiliary grid, and indeed
    this is a one-to-one correspondence between points of the original grid and rectangles of the auxiliary grid.
  \item Given the original grid $\Gamma = \Gamma(\overline{a}, \overline{b})$ and a labeling $\pi\colon\Gamma\to\{0,1\}$, define
    the \emph{$(\Gamma,\pi)$-auxiliary union of rectangles} to be the union of all rectangles of the auxiliary grid whose
    interior contains a point of the original grid labeled $1$.
  \end{enumdef}
\end{definition}

\begin{discussion}\label{disc:halfopen}
  Before we prove Theorem~\ref{thm:alternation}, let us discuss its main obstacle, which justifies our choice to use half-open
  rectangles in Definition~\ref{def:auxiliarygrid}.

  The idea of the proof is naturally to apply our meta-theorem, Theorem~\ref{thm:meta-thm}, in a similar fashion to what we did
  in the case of convex sets (cf.~Theorem~\ref{thm:convex}), taking $\cH$ as the class of all auxiliary unions of rectangles
  that arise from grids with at most $\alt$ alternations. Most conditions of Theorem~\ref{thm:meta-thm} will be easy to check;
  the only one that will require a reasonable amount of work is the fact that $\cH$ is a hypothesis class.

  The basic idea is to encode $\cH$ as a continuous image of a Polish space, making it a Suslin space. Based on the proof of
  Theorem~\ref{thm:meta-thm}, we might be tempted to consider the space $\cK$ of non-empty compact sets and use \emph{closed}
  rectangles instead; however, this does not quite work as we need to bound the $\SVC$-dimension of $\cH$. The issue is that
  given a labeling of a grid with at most $\alt$ alternations, if we take the auxiliary union of \emph{closed} rectangles, then
  the number of alternations can become $2\alt$ along lines in the auxiliary grid, since the boundaries of the rectangles can
  come from either side of the line. But then, since $\cH$ has to be definably realizable in itself, this spirals into an
  unbounded number of alternations.

  A naive way to fix this alternation is to consider unions of \emph{open} rectangles instead. Of course, this would make $\cH$
  not a subset of $\cK$, but we could hope that each element of $\cH$ has some canonical representative in $\cK$ so that $\cK$
  induces a topology on $\cH$. The natural canonical representative of a union $H$ of open rectangles is simply its closure
  $\overline{H}$; however, the closure operation is not injective (e.g., $((0,1)\times(0,1))\cup((1,2)\times(0,1))$ and
  $(0,2)\times(0,1)$ have the same closure).

  It turns out that using half-open rectangles solves both problems (i.e., we preserve the bound on the alternations and ensure
  injectivity of the closure operation) while allowing the proof to go through in a slightly technical but straightforward way.
  Namely, the natural map from samples and labels (as a subset of $\fn{r_0 + r_1 + r_0r_1}{\RR}$) to $\cK$ is easily seen to be
  continuous. The fact that the closure operation on half-open rectangles is injective will also be used to check measurability
  of the evaluation map $\ev$.
\end{discussion}

\begin{proofof}{Theorem~\ref{thm:alternation}}
  We will apply our meta-theorem, Theorem~\ref{thm:meta-thm}, but not directly to $\cC$, but rather a superset of it $\cC'$.

  First, we define $\cH$ as the set of all $(\Gamma,\pi)$-auxiliary unions of rectangles where $\Gamma$ ranges over all grids
  (of finite size) and $\pi$ ranges over all labelings $\pi\colon\Gamma\to\{0,1\}$ that have at most $\alt$ alternations (regardless
  of whether such $\pi$ comes from some element of $\cC$ or not). Clearly $\cH$ is a family with $\alt$-bounded alternation.

  We then take $\cC'\df\cC\cup\cH$, which clearly is also a family with $\alt$-bounded alternation. We now verify that $(\cH,\cC')$
  satisfy the conditions of Theorem~\ref{thm:meta-thm}, that is, we need to verify that:
  \begin{enumerate*}[label={(\arabic*)}]
  \item\label{thm:alternation:subset} $\cH\subseteq\cC'$,
  \item\label{thm:alternation:measurable} all elements of $\cC'$ (and hence of $\cH$) are measurable,
  \item\label{thm:alternation:SVC} $\cH$ has finite slicewise $\VC$-dimension,
  \item\label{thm:alternation:defreal} $\cC'$ is $\cH$-definably realizable on grids, and
  \item\label{thm:alternation:class} $\cH$ is a hypothesis class.
  \end{enumerate*}

  \begin{description}[wide, itemsep={3ex}]
  \item[Item~\ref{thm:alternation:subset}] This follows from construction as $\cC'=\cC\cup\cH$.
  \item[Item~\ref{thm:alternation:measurable}] All elements of $\cC$ are measurable by assumption and all elements of $\cH$ are
    compact sets, hence measurable; thus, all elements of $\cC'=\cC\cup\cH$ are measurable.
  \item[Item~\ref{thm:alternation:SVC}] By construction, $\cH$ is a family with $\alt$-bounded alternation, hence has slicewise
    $\VC$-dimension at most $\alt+1$.
  \item[Item~\ref{thm:alternation:defreal}] For each $m < \omega$, let $\varphi_m(x,y;\overline{z}_m,\overline{w}_m)$ be an
    $\cL$-formula that defines the $(\Gamma,\pi)$-auxiliary union of rectangles, where $\Gamma$ is the grid defined by the
    variables $\overline{z}_m$ and $\pi$ is the labeling of $\Gamma$ defined by the variables of $\overline{w}_m$. It is clear
    from Definition~\ref{def:auxiliarygrid} that such an $\cL$-formula exists.

    We now argue that $\cC'$ is $\cH$-definably realizable. All elements of $\cC'$ have at most $\alt$ alternations, so all
    labelings they induce on grids also have at most $\alt$ alternations, so the corresponding set defined by $\varphi_m$ is in
    $\cH$ and clearly coincides on the grid with the labeling that generated it, so all properties of Definition~\ref{def:Hdr}
    hold.
  \item[Item~\ref{thm:alternation:class}] We now justify that $\cH$ is a hypothesis class (this is motivated in
    Discussion~\ref{disc:halfopen} above). Similarly to the proof of Theorem~\ref{thm:convex}, by Remark~\ref{rmk:adding-F}, it
    suffices to show that $\cH'\df\cH\setminus\{\varnothing\}$ is a hypothesis class and we will make use of the Polish space
    $\cK$ of non-empty compact sets equipped with (topology induced by) Hausdorff metric as well as the space $\cH''$ of all
    closures of elements of $\cH'$.

    First, we note that the closure operation $\cH'\ni H\mapsto\overline{H}\in\cH''$ is a bijection. Indeed, surjection is by
    definition of $\cH''$ and injection follows since the inverse $i\colon\cH''\to\cH$ is given by
    \begin{multline*}
      i(A)
      \df
      \{(x,y)\in\RR^2 : \exists (x_n,y_n)_{n < \omega} \in \fn{\omega}{A}, \lim_{n\to\infty} (x_n,y_n) = (x,y)\land
      \\
      \land
      \forall n < \omega, (x_n > x_{n+1}\land y_n > y_{n+1})
      \}.
    \end{multline*}

    Since $\cH''\subseteq\cK$, we can equip it with the induced topology by the Hausdorff distance and we equip $\cH'$ with the
    topology induced via the bijection above (i.e., we make $\cH'$ homeomorphic to $\cH''$).

    Our goal is to show that $\cH''$ (hence also $\cH'$) is a Suslin space.
    
    For each $r < \omega$, let $\fn{r}{\RR_<}$ be the set of $r$-tuples $\overline{a}\in\fn{r}{\RR}$ that are strictly
    increasing (i.e., $a_0 < a_1 < \cdots < a_{r-1}$) and for $r_0,r_1 < \omega$ let
    \begin{multline*}
      B_{r_0,r_1}
      \df
      \{\ell\in\fn{r_0\times r_1}{\{0,1\}} : \exists i < r_0, \exists j < r_1, \ell(i,j)=1
      \\
      \land\forall i < r_0, \forall j_0 < j_1 < \cdots < j_{\alt+1} < r_1,
      \neg(\forall t < \alt+1, \ell(i,j_t)\neq \ell(i,j_{t+1}))
      \\
      \land
      \forall i_0 < i_1 < \cdots < i_{\alt+1} < r_0, \forall j < r_1,
      \neg(\forall t < \alt+1, \ell(j_t,i)\neq \ell(j_{t+1},i))
      \}
    \end{multline*}
    be the set of all non-zero $\ell\colon r_0\times r_1\to\{0,1\}$ that have at most $\alt$ alternations in every row and every
    column.

    Then there is a natural encoding of elements of $\cH''$ by $\bigcup_{r_0,r_1 < \omega}
    (\fn{r_0}{\RR_<}\times\fn{r_1}{\RR_<}\times B_{r_0,r_1})$, that is, there is a function $c\colon\bigcup_{r_0,r_1 < \omega}
    (\fn{r_0}{\RR_<}\times\fn{r_1}{\RR_<}\times B_{r_0,r_1})\to\cK$ that takes in $(\overline{a},\overline{b},\ell)$, constructs
    the grid $\Gamma(\overline{a},\overline{b})$, uses $\ell$ to create a labeling $\pi$ of $\Gamma$ (via
    $\pi(a_i,b_j)\df\ell(i,j)$), then outputs the $(\Gamma,\pi)$-auxiliary union of rectangles. Note that $\cH''$ is exactly the
    image of $c$.

    For each $r_0,r_1 < \omega$, we equip each component $\fn{r_0}{\RR_<}\times\fn{r_1}{\RR_<}\times B_{r_0,r_1}$ with the
    Euclidean distance (as a subset of $\fn{r_0+r_1+r_0r_1}{\RR}$). We then equip the union $\bigcup_{r_0,r_1 < \omega}
    (\fn{r_0}{\RR_<}\times\fn{r_1}{\RR_<}\times B_{r_0,r_1})$ with the distance that uses the Euclidean distance in each factor
    and sets the distance between points in different factors to be $1$. It is straightforward to check that this is a metric
    that induces a topology that is homeomorphic to the usual coproduct topology in $\coprod_{r_0,r_1 < \omega} \coprod_{\ell\in
      B_{r_0,r_1}} \fn{r_0 + r_1}{\RR}$, hence $\bigcup_{r_0,r_1 < \omega}(\fn{r_0}{\RR_<}\times\fn{r_1}{\RR_<}\times
    B_{r_0,r_1})$ is a Polish space.

    We now claim the encoding function $c$ is continuous. Indeed, we claim that $c$ is locally Lipschitz: namely, given points
    $(\overline{a},\overline{b},\ell), (\overline{a}',\overline{b}',\ell')
    \in\bigcup_{r_0,r_1<\omega}\fn{r_0}{\RR_<}\times\fn{r_1}{\RR_<}\times B_{r_0,r_1}$ that are at distance $\epsilon < 1$, then
    it must be the case that the tuples come from the same component corresponding to the same $(r_0,r_1)$ and must have the
    same labeling $\ell=\ell'$ (this is because $\epsilon < 1$ and different factors are at distance $1$ and different labelings
    of the same factor are at distance at distance $1$).

    It also clear that since these points are at distance $\epsilon$, then the points of the auxiliary grid of
    $(\overline{a},\overline{b})$ must be at distance at most $\epsilon$ from the corresponding ones of
    $(\overline{a}',\overline{b}')$, which in turn implies that the outputs of $c$ on these two points are at Hausdorff distance
    at most $\epsilon$ from each other.

    Since $\cH''$ is exactly the image of $c$, it is the continuous image of a Polish space, hence it is a Suslin space. Thus,
    as $\cH'$ was made homeomorphic to $\cH''$, it is also a Suslin space, so for every standard Borel space $\Upsilon$ and
    every measurable set $Y\subseteq\cH'\times\Upsilon$, the projection of $Y$ onto $\Upsilon$, i.e., the set
    $\{\upsilon\in\Upsilon : \exists H\in\cH', (H,\upsilon)\in Y\}$, is a Suslin set, which is universally measurable.

    \smallskip

    We now need to argue that the evaluation map $\ev\colon\cH'\times\RR^2\to\{0,1\}$ is measurable. This is slightly more
    complicated than our argument of Theorem~\ref{thm:convex} as the topology on $\cH'$ is induced by the one in $\cH''$ and the
    evaluation map over $\cH''$ is not the same as the one on $\cH'$ because of the boundary points. Nevertheless, note that if
    $H\in\cH'$ and $\overline{x}\in\RR^2$, then since $H$ is a finite union of rectangles of the form $[a,b)\times[c,d)$, we get
    \begin{gather*}
      \overline{x}\in H
      \iff
      \exists n < \omega, \forall m > n, \overline{x} + \left(\frac{1}{m},\frac{1}{m}\right)\in\overline{H},
    \end{gather*}
    from which we conclude that
    \begin{gather*}
      \ev(H,\overline{x})
      =
      \One[\overline{x}\in H]
      =
      \One\left[
        \overline{x}
        \in
        \bigcup_{n < \omega}\bigcap_{m > n} \left(\overline{H} - \left(\frac{1}{m},\frac{1}{m}\right)\right)
        \right]
      =
      \liminf_{n\to\infty}
      \One\left[d\left(\overline{x},\overline{H} - \left(\frac{1}{n},\frac{1}{n}\right)\right) = 0\right].
    \end{gather*}

    Now, by Lemma~\ref{lem:Lipschitz}, we know that the point-to-set distance $\RR^2\times\cK\ni(\overline{x},K)\mapsto
    d(\overline{x},K)\in\cK$ is Lipschitz-continuous and it is straightforward to check that translation
    $\RR^2\times\cK\ni(\overline{x},K)\mapsto(K + \overline{x})\in\cK$ is also Lipschitz-continuous; this means that for every
    $n < \omega$, the function $(\overline{x},H)\mapsto\One[d(\overline{x},\overline{H} - (1/n,1/n))=0]$ is measurable, hence
    the evaluation map is also measurable as the limit inferior of measurable functions.
  \end{description}

  Thus, Theorem~\ref{thm:meta-thm} applies.
\end{proofof}

\begin{conclusion}
  Just as in Theorem~\ref{thm:convex}, we could have even made a stronger statement that for every choice of $\mu_0,\mu_1$ there
  is a single $m\times m$ grid $\Gamma$ in $\RR^2$ so that \emph{every} set $C\in\cC$ is $\epsilon$-close (in the sense of
  $\mu_0\otimes\mu_1$) to some definable set given by an instance of $\varphi_m$ with parameters from $\Gamma$. (However, recall
  from Discussion~\ref{disc:gridlabeling}, if we choose this stronger version with a single grid for all elements $C\in\cC$,
  then the labeling of such a grid might not be precisely the same as what $C$ does on it.)

  Furthermore, the definable sets that approximate the elements of $\cC$ are all uniformly bounded unions of half-open
  rectangles. More precisely, they are obtained as follows: our formula $\varphi_m$ takes in a representative grid, forms the
  auxiliary grid (here we use that we are in an ordered field and not just an ordered structure) and takes the union of the half-open
  rectangles with endpoints given by the proof.
\end{conclusion}

\begin{discussion}
  This conclusion points out that Theorem~\ref{thm:alternation} can be seen as a kind of approximate cell decomposition: it
  tells us some key aspects of the good behavior of the actual cell decomposition theorem for o-minimal structures extend
  (approximately but definably) to undefinable families which are a priori well beyond the reach of the real field, but satisfy
  bounded alternation which retains a spirit of o-minimality.
\end{discussion}

\section{Approximation on finite grids}

In this section we specialize the previous results to grids of the form $A \times B$ where $A$, $B$ are finite (but potentially
very large). In this case, per our global conventions, we use the discrete $\sigma$-algebra and so all sets are measurable (and
the space is standard Borel) and our statements become rather broad. $\cC$ can be any family of sets in the plane of bounded
alternation. In fact, our ambient model $M$ containing the grid $A\times B$ can be any ordered field, and is no longer
constrained to be $\RR$.

\begin{theorem}\label{thm:finite-grids}
  For any $\epsilon > 0$ and $\alt < \omega$, there is $m = m(\epsilon,\alt)$ and an $\cL$-formula $\varphi(x,y; \overline{z},
  \overline{w})$ with $\lgn(\overline{z}\conc\overline{w}) = 2m + m^2$ so that for any family $\cC$ with $\alt$-bounded
  alternation, any $C\in\cC$, any $n < \omega$, any $n\times n$ grid $\Gamma = \Gamma(\overline{a}, \overline{b}) = \{(a_i,b_j)
  : i,j < n\}$ in $\RR^2$, and any choice of Borel probability measures $\mu_0$ on $\{ a_i : i < n \}$ and $\mu_1$ on $\{ b_j :
  j < n \}$, there are $2m$ parameters $\overline{c}$ from $\Gamma$ and $\overline{d}\in\fn{m^2}{\{0,1\}}$ such that:
  \begin{gather*}
    (\mu_0\otimes\mu_1) \bigl(\varphi(M^2;\overline{c}\conc\overline{d}) \rstr_\Gamma\symdiff C \rstr_\Gamma\bigr)\leq\epsilon.
  \end{gather*}
  Moreover, such parameters are easy to find in the sense of the uniform convergence statement in Appendix~\ref{sec:UC}.
\end{theorem}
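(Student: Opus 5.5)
We will deduce Theorem~\ref{thm:finite-grids} from the proof of Theorem~\ref{thm:alternation}, i.e., from the meta-theorem (Theorem~\ref{thm:meta-thm}) applied to the class $\cH$ of auxiliary unions of half-open rectangles with at most $\alt$ alternations. We take $m = m(\epsilon,\alt)$ to be the integer given by Theorem~\ref{thm:meta-thm} for $\epsilon$ and $d=\alt+1$. We take $\varphi=\varphi_m$ to be the formula from the proof of Theorem~\ref{thm:alternation}, which defines the $(\Gamma,\pi)$-auxiliary union of rectangles. This formula uses only the ordered-field operations (midpoints, $\pm 1$, order comparisons), so it makes sense in any ordered field $M$. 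Neither $m$ nor $\varphi$ depends on $\cC$, $n$, the grid, or the measures.

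First, push the measures forward to $\RR$: $\mu_0$ and $\mu_1$ are finitely supported on $\{a_i\}$ and $\{b_j\}$, so they are Borel probability measures on $\RR$. In the finite setting every subset of the grid is measurable, so we may replace $C$ by $C' \df C\rstr_\Gamma$, or more conveniently by the set $H_C\in\cH$ defined by $\varphi_N$ on the full grid $\Gamma$ (with $N=n$) and the labeling induced by $C$. This labeling has at most $\alt$ alternations, and $H_C$ agrees with $C$ on $\Gamma$. Hence the family $\cC'\df\{C\rstr_\Gamma : C\in\cC\}\cup\cH$ is Borel, has $\alt$-bounded alternation, and is $\cH$-definably realizable on grids, exactly as in Items~\ref{thm:alternation:subset}--\ref{thm:alternation:class} of the proof of Theorem~\ref{thm:alternation}. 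Since $\mu_0\otimes\mu_1$ is concentrated on $\Gamma$, we have $(\mu_0\otimes\mu_1)(X\symdiff C)=(\mu_0\otimes\mu_1)(X\rstr_\Gamma\symdiff C\rstr_\Gamma)$ for any set $X$.

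Now apply Theorem~\ref{thm:meta-thm}\ref{thm:meta-thm:weak} (or \ref{thm:meta-thm:whp}). In the proof of item~\ref{thm:meta-thm:weak}, the parameters consist of a sampled pair $(\overline{u},\overline{v})$ together with the labeling induced by $C$. Theorem~\ref{thm:twoUCinformal} gives positive probability that an i.i.d.\ sample from $\mu_0^m\otimes\mu_1^m$ is representative. Such samples lie almost surely in $\operatorname{supp}\mu_0\times\operatorname{supp}\mu_1\subseteq\{a_i\}\times\{b_j\}$, so some representative sample has all coordinates among the $a_i$ and $b_j$. This gives $\overline{c}$ from the grid and $\overline{d}\in\fn{m^2}{\{0,1\}}$ with error at most $\epsilon/2\le\epsilon$. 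The ``easy to find'' clause is just item~\ref{thm:meta-thm:whp}.

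The main obstacle is the passage from $\RR$ to an arbitrary ordered field $M$. The meta-theorem and the uniform convergence theorem are stated over $\RR$. I would handle this by noting that everything is finite and combinatorial. Only the order type of $\overline{a}$ and $\overline{b}$ matters, since membership of grid points in $\varphi_m$-sets depends only on order comparisons with midpoints. So we can transfer $\Gamma$ order-isomorphically into $\RR$, using for instance the indices $i$ and $j$. This order isomorphism preserves the truth of $\varphi_m(a_{i'},b_{j'};\dots)$ for grid points because $\varphi_m$ is quantifier-free in order/midpoint terms. We then carry the measures along, apply the argument over $\RR$, and pull the parameters back. We must check carefully that the midpoint construction commutes with the transfer on grid points, i.e., that $(c_{i-1}+c_i)/2$ lies strictly between consecutive elements in any ordered field. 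This holds, so the pullback is legitimate.
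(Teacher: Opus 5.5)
For grids in $\RR^2$ your argument works after one correction, and it differs only mildly from the paper's. The paper takes the finite sets $\{a_i : i<n\}$ and $\{b_j : j<n\}$ themselves as the base spaces, which is allowed because Theorem~\ref{thm:twoUC} is stated for arbitrary standard Borel spaces; then every subset is measurable and sampled parameters automatically come from $\Gamma$. You keep $\RR$ as the base space and use that samples from finitely supported measures lie in the supports almost surely, which is also fine.

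The correction: the family $\cC'=\{C\rstr_\Gamma : C\in\cC\}\cup\cH$ does not satisfy the hypotheses of Theorem~\ref{thm:meta-thm}.
\begin{itemize}
\item A finite point set changes membership twice at each of its points along a line, so $C\rstr_\Gamma$ is not of $\alt$-bounded alternation.
\item More to the point, Definition~\ref{def:Hdr} quantifies over all grids in $\RR^2$. Take $C=\RR^2$ and a grid whose columns interleave the $a_i$ with points outside $\{a_i : i<n\}$. It receives from $C\rstr_\Gamma$ a labeling $1,0,1,0,\dots$ along a row. Then $\varphi_m$ outputs a set outside $\cH$, and Definition~\ref{def:Hdr:varphiincH} fails.
\end{itemize}
Your other option is the right one. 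Take $\cC'\df\cH$ and approximate $H_C$ instead of $C$. On any sample grid inside $\Gamma$ the labelings by $H_C$ and by $C$ coincide, and $(\mu_0\otimes\mu_1)(X\symdiff H_C)$ equals the restricted error of $X$ against $C$.

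The step that genuinely fails is your transfer from an arbitrary ordered field to $\RR$. The statement itself concerns grids in $\RR^2$, so this affects only the section's remark that $M$ may be any ordered field, which you flagged as the main obstacle.

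It is not true that only the order type of $\overline{a},\overline{b}$ matters. Consider a grid point $a_k$ that was not sampled and lies between consecutive sampled values $c_{s-1}<c_s$. It falls in the rectangle of $c_{s-1}$ or of $c_s$ according to the sign of $2a_k-(c_{s-1}+c_s)$. Likewise, points outside the sampled range are compared with $c_0-1$ and $c_{r-1}+1$. An order isomorphism need not preserve these comparisons. For instance, map $\{0,1,10\}$ to the indices $\{0,1,2\}$ and sample the values $0,10$. The point $1$ lies in $[-1,5)$, the rectangle of $0$. Its image $1$ lies in $[1,3)$, the rectangle of the image of $10$. Checking that midpoints lie strictly between consecutive sampled points does not address this. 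Since the error is measured on all of $\Gamma$, the pulled-back parameters are not controlled.

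You could repair this with a quantifier-free $\varphi_m$ and an embedding of $\{a_i\}$, $\{b_j\}$ into $\RR$ preserving the finitely many relevant linear comparisons. Such an embedding exists because a finite system of rational linear inequalities solvable in an ordered field is solvable in $\mathbb{Q}$, by Fourier--Motzkin elimination. The paper's route makes any transfer unnecessary. With the finite sets as base spaces, the argument only uses that the traces on $\{a_i\}\times\{b_j\}$ of the $\varphi_m$-definable sets have boundedly many alternations along rows and columns. That holds verbatim in any ordered field.
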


Informally, Theorem~\ref{thm:finite-grids} says that there is $m = m(\epsilon,\alt)$ and an $\cL$-formula with $2m$ parameter
variables plus $m^2$ variables taking in the values $0$ or $1$ (where $m$ and $\varphi$ depend on $\epsilon$ and $\cC$ but are
independent of grid, measures, or the specific element of $\cC$) with the following property. For any finite grid $\Gamma
\subseteq M^2$, of any size, for any choice of probability measures $\mu_0$ on $\{ a_i : i < n \}$ and $\mu_1$ on $\{ b_j : j <
n \}$, we can subsample $2m$ parameters from $\Gamma$ so that the number of points on $\Gamma$ belonging to our instance of
$\varphi$ is $\epsilon$-close to the number of points on $\Gamma$ belonging to $C$, in the sense of the product measure $\mu_0
\otimes \mu_1$. (The choice of parameters depends, of course, on the measures and on the choice of $C$.) Moreover, ``many such
subsamples will work'' where the formal dependency is given in Appendix~\ref{sec:UC}.

\begin{proof}
  The proof is the same as that of Theorem~\ref{thm:alternation}, but we take as our base spaces $\{a_i : i < n\}$ and $\{b_j :
  j < n\}$, which are standard Borel spaces as they are finite spaces.
\end{proof}

\printbibliography

\appendix

\section{High-arity uniform convergence}
\label{sec:UC}

In this appendix we formally state the high-arity uniform convergence result proved in~\cite{CM24} as we will use it here. We
start by formally stating the theorem that we use in this paper:
\begin{theorem}[Simplified two-way high-arity uniform convergence with $0$-$1$-loss and binary labeling, for
    $k=2$]\label{thm:twoUC}
  There exist absolute constants
  \begin{align*}
    0 & < K < 6873, &
    0 & < K' < 8109.
  \end{align*}
  such that the following holds: for every $d < \omega$ and every $\epsilon,\delta > 0$, there exists
  \begin{gather*}
    m^{\UC} = m^{\UC}(\epsilon,\delta,d) \leq \frac{Kd}{\delta^2\epsilon^2}\cdot\ln\frac{K'd}{\delta^2\epsilon^2}
  \end{gather*}
  such that for all standard Borel spaces $\Omega_0=(X_0,\cB_0)$ and $\Omega_1=(X_1,\cB_1)$, for all probability measures
  $\mu_0$ and $\mu_1$ on $\Omega_0$ and $\Omega_1$, respectively, for every hypothesis class $\cH\subseteq\cP(X_0\times X_1)$
  with $\SVC(\cH)\leq d$, for every measurable $F\subseteq X_0\times X_1$ and every positive integer $m\geq m^{\UC}$, we have
  \begin{gather*}
    \PP_{\substack{\rn{\overline{a}}\sim\mu_0^m\\\rn{\overline{b}}\sim\mu_1^m}}\Bigl[
      \forall H_0,H_1\in\cH\cup\{F\},
      \bigl\lvert
      L_{\mu_0,\mu_1}(H_0,H_1)
      -
      L_{\rn{\overline{a}},\rn{\overline{b}}}(H_0,H_1)
      \bigr\rvert
      \leq
      \epsilon
      \Bigr]
    \geq
    1 - \delta,
  \end{gather*}
  where
  \begin{align*}
    L_{\mu_0,\mu_1}(H_0,H_1)
    & \df
    (\mu_0\otimes\mu_1)(H_0\symdiff H_1),
    \\
    L_{\overline{a},\overline{b}}(H_0,H_1)
    & \df
    \frac{\lvert\{(i,j)\in m\times m : (a_i,b_j)\in H_0\symdiff H_1\}\rvert}{m^2}.
  \end{align*}
\end{theorem}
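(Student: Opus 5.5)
The plan is to deduce Theorem~\ref{thm:twoUC} from the $k=2$ case of the general high-arity uniform convergence theorem of~\cite{CM24}. I also want the argument to be self-contained enough to show where $\SVC$ enters. First I reduce to a single class. Let $\cH_F\df\cH\cup\{F\}$ and $\cE\df\{H_0\symdiff H_1 : H_0,H_1\in\cH_F\}$. Any slice of $\cH_F$ has $\VC$-dimension at most $d+1$, since adding one set raises $\VC$-dimension by at most one. By the standard Sauer--Shelah bound for symmetric differences of pairs, any slice of $\cE$ therefore has growth function bounded by that of a class of $\VC$-dimension $O(d)$. It then suffices to bound
\begin{gather*}
  \rn{\Delta}\df\sup_{E\in\cE}\bigl\lvert(\mu_0\otimes\mu_1)(E) - L_{\rn{\overline{a}},\rn{\overline{b}}}(E)\bigr\rvert .
\end{gather*}
Measurability of $\rn{\Delta}$ (or of the event $\rn{\Delta}\leq\epsilon$) follows from Definition~\ref{def:hypothesis}. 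The evaluation map is measurable, and projections along $\cH$ of measurable sets are universally measurable. So the event in question is the complement of a projection of a measurable subset of $\cH_F^2\times\RR^{2m}$, and all probabilities are taken in the completion.

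Next I split the error through the vertical slices $E_a\df\{y : (a,y)\in E\}$:
\begin{gather*}
  \rn{\Delta}
  \leq
  \sup_{E\in\cE}\Bigl\lvert\frac{1}{m}\sum_{i<m}\Bigl(\frac{1}{m}\sum_{j<m}\One[(\rn{a}_i,\rn{b}_j)\in E]-\mu_1(E_{\rn{a}_i})\Bigr)\Bigr\rvert
  +
  \sup_{E\in\cE}\Bigl\lvert\frac{1}{m}\sum_{i<m}\mu_1(E_{\rn{a}_i})-\int\mu_1(E_a)\,d\mu_0(a)\Bigr\rvert .
\end{gather*}
I handle the first term by conditioning on $\rn{\overline{a}}$. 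For each fixed $i$, the slices $\{E_{\rn{a}_i}:E\in\cE\}$ form a family of bounded $\VC$-type complexity, so the classical one-variable uniform convergence in expectation gives an inner supremum of order $\sqrt{d\log m/m}$. Averaging over $i$ keeps this bound, because the supremum of an average is at most the average of the suprema.

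The second term is the main obstacle. It is a uniform law of large numbers for the $[0,1]$-valued class $\{a\mapsto\mu_1(E_a)\}$, whose complexity is not controlled directly by any $\VC$-dimension. My approach is symmetrization, which bounds its expectation by twice the Rademacher complexity $\mathbb{E}\sup_{E}\frac{1}{m}\sum_i\sigma_i\mu_1(E_{\rn{a}_i})$. I then write $\mu_1(E_a)=\mathbb{E}_{\rn{\overline{b}}'}\frac{1}{m}\sum_j\One[(a,\rn{b}'_j)\in E]$ with a ghost sample $\rn{\overline{b}}'$ and apply Jensen to move $\mathbb{E}_{\rn{\overline{b}}'}$ outside the supremum. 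Exchanging the order of summation, the result is at most $\frac{1}{m}\sum_j\mathbb{E}\sup_E\frac{1}{m}\sum_i\sigma_i\One[(\rn{a}_i,\rn{b}'_j)\in E]$. This is an average of Rademacher complexities of the \emph{horizontal} slice classes at the fixed points $\rn{b}'_j$, each of order $\sqrt{d\log m/m}$. This is exactly where the slicewise (two-way) hypothesis is used: vertical slices control the first term and horizontal slices control the second.

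Combining the two terms gives $\mathbb{E}[\rn{\Delta}]\leq C\sqrt{d\ln m/m}$ for an absolute constant $C$. Markov's inequality then yields $\PP[\rn{\Delta}>\epsilon]\leq\delta$ as soon as $C^2 d\ln m/m\leq\delta^2\epsilon^2$. Solving this inequality for $m$ gives a bound of the stated shape $\frac{Kd}{\delta^2\epsilon^2}\ln\frac{K'd}{\delta^2\epsilon^2}$, which explains the $\delta^2$ dependence. The explicit constants $K<6873$ and $K'<8109$ are obtained by tracking the constants in the Sauer--Shelah and Massart-type bounds, or by quoting the corresponding instantiation in~\cite{CM24}.
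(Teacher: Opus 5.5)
Your argument is correct. It matches the paper only in its first step and then takes a different route.

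\textbf{Shared first step.} The paper also passes to $\cH'=\cH\cup\{F\}$ and to $\cH'\otriangle\cH'$ (your $\cE$). It bounds the slicewise $\VC$-dimension of the latter by $C(d+1)$ via Lemma~\ref{lem:symdiff}.

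\textbf{Where you diverge.} The paper then just invokes the one-way theorem of~\cite{CM24} (Theorem~\ref{thm:oneUC}) for $\cH'\otriangle\cH'$ against $F=\varnothing$, which gives $K=2C\widetilde{K}$ and $K'=2C\widetilde{K'}$. You instead prove that one-way bound from scratch, so despite your first sentence you never actually use~\cite{CM24}. Your ingredients are the row/column split, the conditional classical bound on vertical slices, and the ghost-sample/Jensen step, followed by Markov. That Jensen step turns the Rademacher complexity of the $[0,1]$-valued class $a\mapsto\mu_1(E_a)$ into an average of Rademacher complexities of horizontal slice classes.

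\textbf{What each route buys.} The paper's route is two lines on top of a far more general black box. Yours is elementary and shows exactly why both slice directions are needed. It can also use the trace bound $(m+1)^{2(d+1)}$ for slices of $\cE$ (the square of Sauer--Shelah for $\cH_F$) in place of the entropy constant $C\leq 9.09$.

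\textbf{Constants.} The numbers $6873$ and $8109$ cannot be quoted from~\cite{CM24}, which only contains the one-way statement; they are precisely the output of the paper's $2C\widetilde{K}$ reduction. On your route you must track constants yourself, but this is painless. Massart's lemma gives $\mathbb{E}[\rn{\Delta}]\leq 4\sqrt{10d\ln(m+1)/m}$ for $d\geq 1$. Then $m\geq 2a\ln(2a)$ with $a=160d/(\delta^2\epsilon^2)$ suffices when $\epsilon,\delta<1$. So $K,K'$ of about $320$ work, well inside the stated bounds.

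\textbf{Measurability.} Your measurability sentence projects along $\cH_F^2$. Definition~\ref{def:hypothesis} only grants projections along a single copy of $\cH$ onto a standard Borel space, so a small extra argument is needed (it is automatic when $\cH$ is Suslin). The paper relies on the same point implicitly when it applies Theorem~\ref{thm:oneUC} to $\cH'\otriangle\cH'$.
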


We directly quote the uniform convergence property that was defined and proved in~\cite{CM24} before giving a simplified
form that suffices for this paper, and we point out that the simplified form we will use can also be retrieved from
Livni--Mansour~\cite{LM19a,LM19b}. 
\begin{center}
  \includegraphics[width=0.8\linewidth]{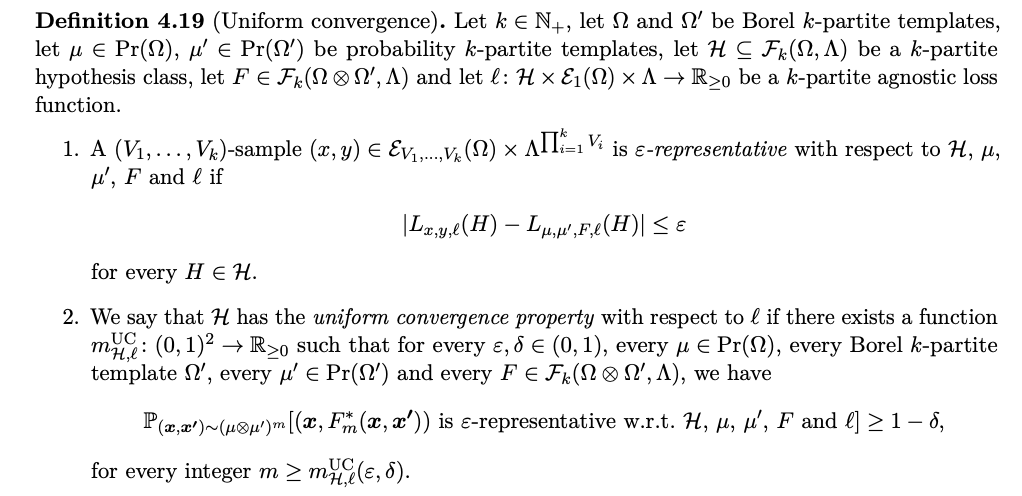}

  \includegraphics[width=0.8\linewidth]{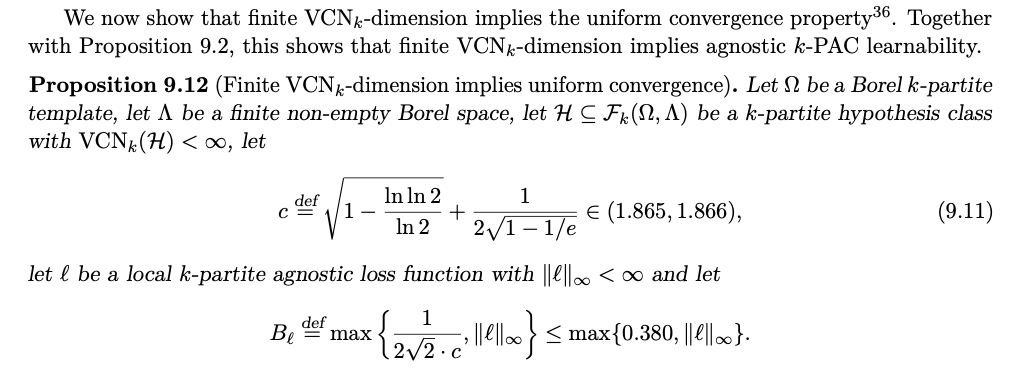}

  \includegraphics[width=0.8\linewidth]{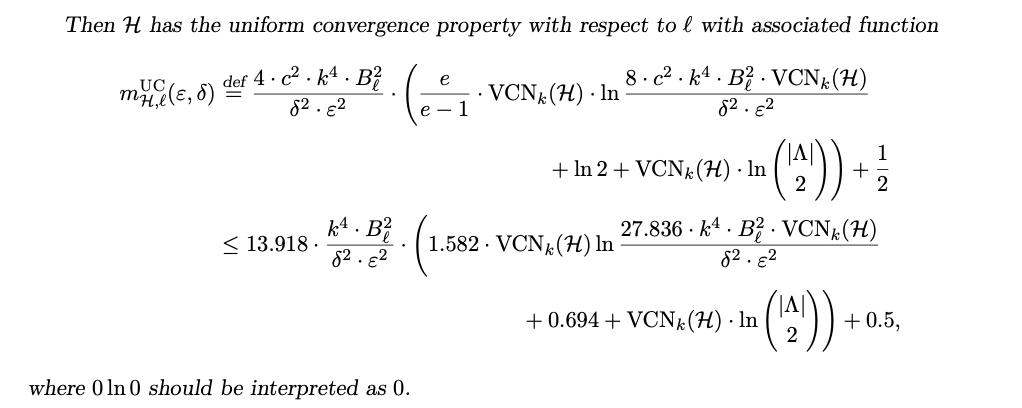}

  \includegraphics[width=0.8\linewidth]{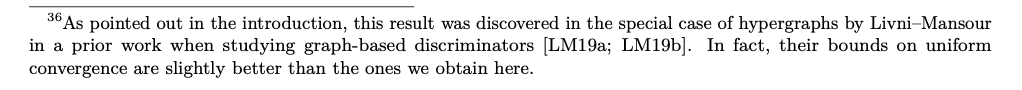}
\end{center}

We won't give all of the constituent definitions in their full generality, but we give a short description of the additional
features present in the above that are not used in this work; we refer readers interested in these features to the expository
Section~2 of~\cite[\S 2]{CM24}.
\begin{enumerate}
\item Obviously, the above covers general arity $k < \omega$ and not just the case $k=2$ used here\footnote{Let us also point
out that in set theory language, one writes $f\colon k\to m$ and starts counting at zero; but~\cite{CM24} was written in a more
combinatorial language, in which one writes $f\colon [k]\to [m]$ and starts counting at one.}.
\item It also allows for arbitrary standard Borel spaces (although, all such spaces are Borel-isomorphic to a Borel subset of
  $\RR$), i.e., instead of having $\RR^k$, we consider $\Omega_1\times\cdots\times\Omega_k$.
\item Hypotheses are more general than just subsets $H\subseteq\Omega_1\times\cdots\times\Omega_k$, they are allowed to be
  functions $H\colon\Omega_1\times\cdots\times\Omega_k\to\Lambda$, where $\Lambda$ is a finite set of labels (the case of sets
  is retrieved by taking $\Lambda=\{0,1\}$ with the set encoded by the preimage of $1$). In turn, the corresponding combinatorial
  dimension $\VCN_k$, is a version of the slicewise $\VC$-dimension that is based on the Natarajan dimension instead (hence the
  ``N'' added to ``$\VC$'').
\item It uses an arbitrary loss function $\ell$ that takes in a hypothesis $H$, a tuple $\overline{a}$ and a label $y\in\Lambda$
  and outputs a ``penalty'' $\ell(H,\overline{a},y)\in\RR_{\geq 0}$ that $H$ incurs at the point $\overline{a}$ when compared to
  the label $y$. The total loss of $H$ versus some $F$ versus some tuple of measures $\mu$ is computed as the expected value of
  $\ell(H,\rn{\overline{a}},F(\rn{\overline{a}}))$ when $\rn{\overline{a}}$ is picked at random according to the product measure
  of $\mu$. Our setup is retrieved by taking the $0$-$1$-loss that simply outputs $1$ if $H(\overline{a})\neq y$ and outputs $0$
  otherwise.

  In the theorem, the agnostic loss is further required to be bounded (in the usual sense) and local, which means that it
  factors as the sum of a value $\ell'(\overline{a},H(\overline{a}),y)$ that depends only on the point $\overline{a}$, the label
  $y$ and the labels that $H$ assigns to $\overline{a}$ with a value $r(H)$ that only depends on $H$. (Clearly, the $0$-$1$-loss
  is local as it only depends on $y$ and $H(\overline{a})$.)
\item The uniform convergence is actually computed against an ``agnostic measure-function pair'' $(\nu,F)$ rather than a usual
  measure-function pair $(\mu,F)$. Without going into technical details, this means that there can be hidden randomness
  variables ($\rn{x'}$) that introduce noise into the system: the label of a tuple $\rn{x}$ is decided based not only on the
  elements of $\rn{x}$ but also of hidden random variables $\rn{x'}$ that are sampled for every finite subset of $\omega$
  according to a local rule $F$ (this precisely captures local separately exchangeable distributions via the Aldous--Hoover
  Theorem~\cite{Ald81,Ald85,Hoo79}).
\item The hypotheses are also allowed to be Aldous--Hoover representations, which means that the underlying space
  $\cE_1(\Omega)$ considerably more technical than simply the product $\Omega_1\times\cdots\times\Omega_k$.
\end{enumerate}

We now state the simplified version of the above that we need to prove Theorem~\ref{thm:twoUC}:
\begin{theorem}[High-arity uniform convergence, $k=2$]\label{thm:oneUC}
  There exist absolute constants
  \begin{align*}
    0 & < K < 378, &
    0 & < K' < 446.
  \end{align*}
  such that the following holds: for every $d < \omega$ and every $\epsilon,\delta > 0$, there exists
  \begin{gather*}
    m^{\UC} = m^{\UC}(\epsilon,\delta,d) \leq \frac{Kd}{\delta^2\epsilon^2}\cdot\ln\frac{K'd}{\delta^2\epsilon^2}
  \end{gather*}
  such that for all standard Borel spaces $\Omega_0=(X_0,\cB_0)$ and $\Omega_1=(X_1,\cB_1)$, for all probability measures
  $\mu_0$ and $\mu_1$ on $\Omega_0$ and $\Omega_1$, respectively, for every hypothesis class $\cH\subseteq\cP(X_0\times X_1)$
  with $\SVC(\cH)\leq d$, for every measurable $F\subseteq X_0\times X_1$ and every positive integer $m\geq m^{\UC}$, we have
  \begin{gather*}
    \PP_{\substack{\rn{\overline{a}}\sim\mu_0^m\\\rn{\overline{b}}\sim\mu_1^m}}\Bigl[
      \forall H\in\cH,
      \bigl\lvert
      L_{\mu_0,\mu_1}(F,H)
      -
      L_{\rn{\overline{a}},\rn{\overline{b}}}(F,H)
      \bigr\rvert
      \leq
      \epsilon
      \Bigr]
    \geq
    1 - \delta,
  \end{gather*}
  where
  \begin{align*}
    L_{\mu_0,\mu_1}(F,H)
    & \df
    (\mu_0\otimes\mu_1)(F\symdiff H),
    \\
    L_{\overline{a},\overline{b}}(F,H)
    & \df
    \frac{\lvert\{(i,j)\in m\times m : (a_i,b_j)\in F\symdiff H\}\rvert}{m^2}.
  \end{align*}
\end{theorem}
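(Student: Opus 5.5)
The plan is to obtain Theorem~\ref{thm:oneUC} as a direct specialization of the general uniform convergence theorem of~\cite{CM24} quoted above, rather than reprove it from scratch. I will instantiate it with arity $k=2$, label set $\Lambda=\{0,1\}$, the $0$-$1$-loss, the standard Borel spaces $\Omega_0,\Omega_1$ themselves (no Aldous--Hoover enrichment), and a non-agnostic measure-function pair. I will then check item by item that each general object collapses to the corresponding object in Theorem~\ref{thm:oneUC}.

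\textbf{Step 1: matching the hypotheses and the dimension.} A set $H\subseteq X_0\times X_1$ is identified with its indicator $\One_H\colon X_0\times X_1\to\{0,1\}$. The conditions of Definition~\ref{def:hypothesis} (measurable evaluation, measurable singletons, universally measurable projections) are exactly the measurability requirements of~\cite{CM24} for a $k$-partite hypothesis class in the non-Aldous--Hoover case. For the dimension, recall that the Natarajan dimension of a family of $\{0,1\}$-valued functions equals its $\VC$-dimension: a $2$-label Natarajan witness must use distinct labels, and these can only be $0$ and $1$. Since $\VCN_2$ is the slicewise version of Natarajan dimension, fixing one coordinate and taking the Natarajan dimension of the resulting unary family gives $\VCN_2(\cH)=\SVC(\cH)\leq d$, with $\SVC$ as in Definition~\ref{def:svc}.

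\textbf{Step 2: matching the losses.} The $0$-$1$-loss $\ell(H,\overline{x},y)=\One[H(\overline{x})\neq y]$ is bounded by $1$. It is also local, with $r\equiv 0$ and $\ell'(\overline{x},H(\overline{x}),y)=\One[H(\overline{x})\neq y]$. I take the agnostic pair to be the one induced by $(\mu_0\otimes\mu_1,\One_F)$, so the hidden randomness plays no role; this is legitimate because $F$ is measurable. Then the total loss is $\mathbb{E}\,\One[H(\rn{x})\neq F(\rn{x})]=(\mu_0\otimes\mu_1)(F\symdiff H)=L_{\mu_0,\mu_1}(F,H)$. The $k$-partite sample for $k=2$ consists of $\rn{\overline{a}}\sim\mu_0^m$ and $\rn{\overline{b}}\sim\mu_1^m$, independent. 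The empirical loss averages the loss over all $f\colon 2\to m$-indexed tuples $(\rn{a}_{f(0)},\rn{b}_{f(1)})$, i.e.\ over all $m^2$ pairs $(i,j)$, which gives exactly $L_{\rn{\overline{a}},\rn{\overline{b}}}(F,H)$. I will have to reconcile the $[k]\to[m]$ indexing convention of~\cite{CM24} with the $0$-based one here, as in the footnote above.

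\textbf{Step 3: constants, and the expected obstacle.} With these identifications, the probabilistic statement of~\cite{CM24} becomes literally the displayed inequality of Theorem~\ref{thm:oneUC}. It remains to plug $k=2$ and the loss bound $1$ into the general sample-complexity bound and simplify, to get $m^{\UC}\leq\frac{Kd}{\delta^2\epsilon^2}\ln\frac{K'd}{\delta^2\epsilon^2}$ with $K<378$ and $K'<446$. I expect this to be routine arithmetic, using monotonicity of $x\ln x$ to absorb lower-order terms. The step I expect to be the main obstacle is Step~2: confirming that the agnostic formalism with trivial hidden variables really reduces to the plain product measure and the full $m\times m$ grid average. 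In particular, the empirical loss of~\cite{CM24} must range over all maps $f$, not only injective ones, so that repeated rows and columns are counted as in $L_{\overline{a},\overline{b}}$. If the general theorem were phrased only for injective maps, I would first try to argue that the non-injective terms contribute at most $O(1/m)$, which is absorbed by enlarging $m^{\UC}$ slightly.
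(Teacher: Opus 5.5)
Your proposal is correct and takes the same route as the paper, which obtains Theorem~\ref{thm:oneUC} by specializing the quoted theorem of~\cite{CM24} to $k=2$, $\Lambda=\{0,1\}$ (where $\VCN_2$ reduces to $\SVC$), the bounded local $0$-$1$-loss, a non-agnostic measure-function pair and hypotheses without Aldous--Hoover enrichment, with the constants read off from the general sample-complexity bound. Your injectivity worry should not arise, since in the partite setting the empirical loss averages over all $m^k$ choices of one sample point from each part, i.e.\ over the full $m\times m$ grid; your $O(1/m)$ fallback would also handle it if needed.
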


Note that Theorem~\ref{thm:oneUC} is a priori weaker than Theorem~\ref{thm:twoUC}: it does not allow comparison of two arbitrary
elements of $\cH$, rather it only allows comparison of an arbitrary element of $\cH$ with the measurable $F$ fixed in advance
(more simply, it would be as if in Theorem~\ref{thm:twoUC} we required $H_0$ to be $F$). To upgrade Theorem~\ref{thm:oneUC}
to~\ref{thm:twoUC}, we recall some concepts and basic facts of learning theory:
\begin{definition}\label{def:VC}
  A family $\cH\subseteq\cP(X)$ of subsets of a set $X$ \emph{shatters} a set $U$ if for
  \begin{gather*}
    \cH\rest_U \df \{H\cap U : H\in\cH\}
  \end{gather*}
  we have $\cH\rest_U = \cP(U)$, that is, all possible patterns on $U$ are realized by some element of $H$.

  The \emph{Vapnik--\Cervonenkis ($\VC$) dimension} of $\cH$ is defined as
  \begin{gather*}
    \VC(\cH) \df \sup\{\lvert U\rvert : \text{$U$ finite and $\cH$ shatters $U$}\}.
  \end{gather*}
\end{definition}

\begin{lemma}[Sauer~\cite{Sau72}--Shelah~\cite{She72}--Perles~\cite{Per72}]\label{lem:SSP}
  Let $\cH\subseteq\cP(X)$ be a family with $\VC(\cH) < \infty$ and let $U\subseteq X$ be a finite set, then
  \begin{gather*}
    \lvert\cH\rest_U\rvert \leq \sum_{i\leq\VC(\cH)} \binom{\lvert U\rvert}{i}.
  \end{gather*}
\end{lemma}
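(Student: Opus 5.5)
This is the Sauer--Shelah--Perles lemma. I would prove it by induction on $\lvert U\rvert$ via the standard shifting/splitting argument, proving a slightly stronger statement that does not reference the whole family's dimension: for every finite $U$ and every family $\cF\subseteq\cP(U)$,
\begin{gather*}
  \lvert\cF\rvert \leq \lvert\{S\subseteq U : \text{$\cF$ shatters $S$}\}\rvert.
\end{gather*}
Applying this to $\cF\df\cH\rest_U$ gives the lemma. Indeed, any set $S\subseteq U$ shattered by $\cH\rest_U$ is shattered by $\cH$, because $(H\cap U)\cap S = H\cap S$. Hence every shattered $S$ has $\lvert S\rvert\leq\VC(\cH)$. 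The number of subsets of $U$ of size at most $\VC(\cH)$ is exactly $\sum_{i\leq\VC(\cH)}\binom{\lvert U\rvert}{i}$.

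The stronger statement (often attributed to Pajor) is proved by induction on $\lvert U\rvert$. If $U=\varnothing$, then $\cF$ is either empty or $\{\varnothing\}$. In the second case $\varnothing$ is shattered, so the inequality holds. For the inductive step, fix $x\in U$, let $U'\df U\setminus\{x\}$, and define two families on $U'$:
\begin{align*}
  \cF_0 & \df \{F\setminus\{x\} : F\in\cF\}, &
  \cF_1 & \df \{G\subseteq U' : G\in\cF \text{ and } G\cup\{x\}\in\cF\}.
\end{align*}
Then $\lvert\cF\rvert = \lvert\cF_0\rvert+\lvert\cF_1\rvert$. The map $F\mapsto F\setminus\{x\}$ is at most two-to-one, and the fibres of size two are exactly the pairs counted by $\cF_1$.

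By the inductive hypothesis, $\lvert\cF_0\rvert$ is bounded by the number of sets $S\subseteq U'$ shattered by $\cF_0$, and $\lvert\cF_1\rvert$ is bounded by the number of sets $S\subseteq U'$ shattered by $\cF_1$. Two observations show that these two collections inject disjointly into the sets shattered by $\cF$:
\begin{enumerate}
\item Any $S\subseteq U'$ shattered by $\cF_0$ is shattered by $\cF$, since $x\notin S$.
\item If $S\subseteq U'$ is shattered by $\cF_1$, then $S\cup\{x\}$ is shattered by $\cF$. Each pattern $T\subseteq S$ is realized by some $G\in\cF_1$, and both $G$ and $G\cup\{x\}$ lie in $\cF$. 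This provides both the pattern $T$ and the pattern $T\cup\{x\}$ on $S\cup\{x\}$.
\end{enumerate}
Sets of the first kind do not contain $x$ and sets of the second kind do, so the two collections are disjoint. Summing the two bounds therefore gives the claim for $\cF$.

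The only step needing care is this counting identity together with the disjointness check; the rest is bookkeeping. Alternatively, one could run the standard induction directly on the binomial sum, using Pascal's rule $\binom{n}{i}=\binom{n-1}{i}+\binom{n-1}{i-1}$ and noting $\VC(\cF_1)\leq\VC(\cH)-1$. The Pajor formulation avoids tracking the dimension, so I would use it.
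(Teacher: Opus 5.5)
Your proof is correct and complete. The paper itself gives no proof of this lemma: it quotes it as a classical result, citing Sauer, Shelah and Perles, and uses it only as a black box in the proof of Lemma~\ref{lem:symdiff} to bound $\lvert\cH\rest_U\rvert$. So there is no argument in the paper to compare against.

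Compared with the usual textbook proof, you take Pajor's route. You prove the stronger, dimension-free inequality $\lvert\mathcal{F}\rvert\leq\lvert\{S\subseteq U : \mathcal{F}\text{ shatters }S\}\rvert$ for every $\mathcal{F}\subseteq\cP(U)$. This lets you run a single induction on $\lvert U\rvert$ without tracking $d$ or treating small cases of $d$ separately. The binomial bound then follows from a one-line count. The classical proof instead runs a double induction on $\lvert U\rvert$ and $d$ with Pascal's rule, using $\VC(\mathcal{F}_1)\leq\VC(\mathcal{F})-1$; this is the alternative you mention at the end. It yields the binomial sum directly but must carry the dimension through the induction.

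Your key checks are all right:
\begin{itemize}
\item the identity $\lvert\mathcal{F}\rvert=\lvert\mathcal{F}_0\rvert+\lvert\mathcal{F}_1\rvert$ via the at-most-two-to-one map $F\mapsto F\setminus\{x\}$;
\item sets shattered by $\mathcal{F}_0$ remain shattered by $\mathcal{F}$;
\item $S\mapsto S\cup\{x\}$ sends sets shattered by $\mathcal{F}_1$ to sets shattered by $\mathcal{F}$;
\item the two resulting collections are disjoint;
\item the reduction from $\cH$ to $\cH\rest_U$ via $(H\cap U)\cap S=H\cap S$.
\end{itemize}
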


To upgrade to two-way, we will use a simple trick involving the symmetric difference family $\cH\otriangle\cH$ below:
\begin{lemma}\label{lem:symdiff}
  Let $\cH\subseteq\cP(X)$ and let
  \begin{equation*}
    \cH\otriangle\cH \df \{H_1\symdiff H_2 : H_1,H_2\in\cH\}.
  \end{equation*}
  Then we have
  \begin{gather*}
    \VC(\cH\otriangle\cH)
    \leq
    C\cdot\VC(\cH),
  \end{gather*}
  where
  \begin{gather*}
    C
    \df
    \max\left\{c\geq 1 : h_2\left(\frac{1}{c}\right) \leq \frac{1}{2}\right\}
    \leq
    9.09,
    \\
    h_2(p)
    \df
    p\log_2\frac{1}{p} + (1-p)\log_2\frac{1}{1-p}.
  \end{gather*}
\end{lemma}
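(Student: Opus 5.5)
Write $D\df\VC(\cH)$ and assume $D\geq 1$. If $D=0$, then $\cH$ has at most one element, so $\cH\otriangle\cH=\{\varnothing\}$ has $\VC$-dimension $0$ and there is nothing to prove. The argument is a counting argument via Lemma~\ref{lem:SSP}. Suppose $U\subseteq X$ is a finite set of size $n$ that is shattered by $\cH\otriangle\cH$. Every trace of a symmetric difference factors through traces of its two pieces:
\begin{gather*}
  (H_1\symdiff H_2)\cap U = (H_1\cap U)\symdiff(H_2\cap U).
\end{gather*}
Hence $(\cH\otriangle\cH)\rest_U$ is contained in the image of $\cH\rest_U\times\cH\rest_U$ under $(A,B)\mapsto A\symdiff B$. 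It follows that
\begin{gather*}
  2^n = \lvert(\cH\otriangle\cH)\rest_U\rvert \leq \lvert\cH\rest_U\rvert^2.
\end{gather*}

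Next I bound $\lvert\cH\rest_U\rvert$ using Lemma~\ref{lem:SSP} together with the standard entropy estimate for partial binomial sums. For $D\leq n/2$,
\begin{gather*}
  \sum_{i\leq D}\binom{n}{i}\leq 2^{n h_2(D/n)}.
\end{gather*}
This follows from the usual argument: multiply each term by $p^i(1-p)^{n-i}$ with $p=D/n\leq 1/2$, note that $p^i(1-p)^{n-i}\geq p^D(1-p)^{n-D}$ for $i\leq D$, and sum to get at most $1$. Combining with the previous display gives $2^n\leq 2^{2nh_2(D/n)}$, that is,
\begin{gather*}
  h_2\left(\frac{D}{n}\right)\geq\frac{1}{2}
\end{gather*}
whenever $n\geq 2D$. (If $n<2D$, then $n\leq 2D\leq CD$ trivially, since $C\geq 2$ because $h_2(1/2)=1>1/2$.)

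It remains to convert $h_2(D/n)\geq 1/2$ into the bound $n\leq CD$. Set $c\df n/D\geq 2$. On $[2,\infty)$ the function $c\mapsto h_2(1/c)$ is strictly decreasing, because $h_2$ is increasing on $(0,1/2]$. Therefore $h_2(1/c)\geq 1/2$ forces $c$ to lie below the unique solution $c^*$ of $h_2(1/c^*)=1/2$. One also has to observe that $\{c\geq 1: h_2(1/c)\leq 1/2\}$ contains $c^*$, and this is the quantity $C$ in the statement. The intended inequality is the one with the threshold at $h_2=1/2$, so the set whose maximum is taken should be read as the one pinning down this crossing point. Some care is needed here, since the set as literally written is unbounded above. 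I would make precise that $C$ denotes the unique $c\geq 2$ with $h_2(1/c)=1/2$, equivalently the supremum of the $c\geq 2$ with $h_2(1/c)\geq 1/2$. This yields $n\leq CD$, and taking the supremum over shattered $U$ gives $\VC(\cH\otriangle\cH)\leq C\cdot\VC(\cH)$.

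Finally, the numerical bound $C\leq 9.09$ is checked directly. We have $h_2(0.11)\approx 0.4999\leq 1/2$, so the crossing point $1/C$ is at least about $0.11$, giving $C\leq 1/0.11\approx 9.09$. I expect the only real obstacle to be the careful handling of the definition of $C$ (the monotonicity and threshold bookkeeping). The combinatorial core, namely squaring the Sauer--Shelah--Perles bound, is routine.
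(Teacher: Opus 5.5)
Your proof is correct and is the paper's argument ($2^n\le\lvert\cH\rest_U\rvert^2$, then Lemma~\ref{lem:SSP} with the entropy bound for $D\le n/2$, then monotonicity of $h_2$). Your conclusion $h_2(D/n)\ge\frac{1}{2}$ and your reading of $C$ as the unique $c\ge 2$ with $h_2(1/c)=\frac{1}{2}$ are the intended ones: the paper's proof writes $h_2(d/m)\le\frac{1}{2}$, matching the literal definition of $C$, and both inequalities should be reversed. Only small touch-ups remain: state that $D=\infty$ is trivial (Lemma~\ref{lem:SSP} needs $D<\infty$), and since $1/0.11=100/11>9.09$, verify $C\le 9.09$ by checking $h_2(1/9.09)\approx 0.49995\le\frac{1}{2}$ instead (in fact $C\approx 9.089$).
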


\begin{proof}
  Let $d\df\VC(\cH)$. The result is trivial if $d = \infty$, so we suppose $d < \infty$. Let $U\subseteq X$ be a finite set
  shattered by $\cH\otriangle\cH$ and let $m\df\lvert U\rvert$. Our goal is to show that $m\leq C\cdot d$ and since $C\geq 9$,
  we may assume $d\leq m/2$ otherwise there is nothing to prove. Note that
  \begin{gather*}
    2^m
    =
    \bigl\lvert(\cH\otriangle\cH)\rest_U\bigr\rvert
    \leq
    \bigl\lvert\cH\rest_U\bigr\rvert^2
    \leq
    \left(\sum_{i\leq d}\binom{m}{i}\right)^2
    \leq
    \bigl(2^{h_2(d/m)\cdot m}\bigr)^2,
  \end{gather*}
  where the second inequality is by Lemma~\ref{lem:SSP} and the third inequality is the standard bound on the size of layers of
  the hypercube as $d\leq m/2$ (see e.g.~\cite[Lemma~4.7.2]{Ash65}).

  Thus, we conclude that
  \begin{gather*}
    h_2\left(\frac{d}{m}\right) \leq \frac{1}{2},
  \end{gather*}
  so by the definition of $C$, we get $m/d\leq C$, hence $m\leq C\cdot d$ as desired.
\end{proof}

We can now prove Theorem~\ref{thm:twoUC} from Theorem~\ref{thm:oneUC}:
\begin{proofof}{Theorem~\ref{thm:twoUC}}
  Since we will reduce this result to Theorem~\ref{thm:oneUC}, all notation corresponding to it will use tildes, while the
  notation of the current theorem will not.
  
  Let $\widetilde{K}$ and $\widetilde{K'}$ be the absolute constants of Theorem~\ref{thm:oneUC} and let $C$ be as in
  Lemma~\ref{lem:symdiff} and set
  \begin{align*}
    K & \df 2C\widetilde{K} < 6873,
    &
    K' & \df 2C\widetilde{K'} < 8109.
  \end{align*}
  The result is trivial if $d = 0$ (as $\cH$ must then consist of at most one element), so we suppose $d > 0$. We now take
  \begin{gather*}
    m^{\UC}(\epsilon,\delta,d)
    \df
    \widetilde{m}^{\UC}(\epsilon,\delta,\floor{C(d+1)})
    \leq
    \frac{\widetilde{K}C(d+1)}{\delta^2\epsilon^2}\cdot\ln\frac{\widetilde{K'}C(d+1)}{\delta\epsilon}
    \leq
    \frac{Kd}{\delta^2\epsilon^2}\cdot\ln\frac{K'd}{\delta^2\epsilon^2},
  \end{gather*}
  where the last inequality follows since $d\geq 1$ (so $C(d+1)\leq 2Cd$).

  Consider a hypothesis class $\cH$ with $\SVC(\cH)\leq d$ and let $\cH'\df\cH\cup\{F\}$ and note that
  $\SVC(\cH')\leq\SVC(\cH)+1\leq d+1$. By Lemma~\ref{lem:symdiff} applied to each slice of $\cH'$, we have
  $\SVC(\cH'\otriangle\cH')\leq C(d+1)$. For a positive integer $m\geq m^{\UC}$, using the guarantee of Theorem~\ref{thm:oneUC}
  for $\cH'\otriangle\cH'$ against the empty set $F=\varnothing$, we get
  \begin{gather}\label{eq:twoUC}
    \PP_{\substack{\rn{\overline{a}}\sim\mu_0^m\\\rn{\overline{b}}\sim\mu_1^m}}\Bigl[
      \forall H\in\cH'\otriangle\cH',
      \bigl\lvert
      L_{\mu_0,\mu_1}(\varnothing, H)
      -
      L_{\rn{\overline{a}},\rn{\overline{b}}}(\varnothing, H)
      \bigr\rvert
      \leq
      \epsilon
      \Bigr]
    \geq
    1 - \delta.
  \end{gather}

  But note that for every $H_0,H_1\in\cH\cup\{F\}=\cH'$, we have $H_0\symdiff H_1\in\cH'\otriangle\cH'$ and
  \begin{align*}
    L_{\mu_0,\mu_1}(\varnothing, H_0\symdiff H_1)
    & =
    L_{\mu_0,\mu_1}(H_0,H_1),
    \\
    L_{\rn{\overline{a}},\rn{\overline{b}}}(\varnothing, H_0\symdiff H_1)
    & =
    L_{\rn{\overline{a}},\rn{\overline{b}}}(H_0,H_1),
  \end{align*}
  so~\eqref{eq:twoUC} directly translates to the conclusion of the theorem.
\end{proofof}

\section{Hausdorff metric and convex sets}
\label{sec:Hausdorff}

For the purposes of self-contanment, we now provide short proofs of the simple lemmas on Hausdorf distance.
\begin{lemma}\label{lem:Lipschitz}
  For $\overline{x},\overline{y}\in\RR^2$ and sets $A,B\subseteq\RR^2$, we have
  \begin{gather*}
    \lvert d(\overline{x},A) - d(\overline{y}, B)\rvert
    \leq
    d(\overline{x},\overline{y}) + d_H(A,B),
  \end{gather*}
  where $d$ is the Euclidean distance and $d_H$ is the Hausdorff distance (induced by Euclidean distance).
\end{lemma}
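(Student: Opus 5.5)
The plan is to prove the inequality by splitting it through the intermediate quantity $d(\overline{y},A)$ and applying the triangle inequality:
\begin{gather*}
  \lvert d(\overline{x},A) - d(\overline{y},B)\rvert
  \leq
  \lvert d(\overline{x},A) - d(\overline{y},A)\rvert
  +
  \lvert d(\overline{y},A) - d(\overline{y},B)\rvert.
\end{gather*}
It then suffices to show that the first term is at most $d(\overline{x},\overline{y})$ and the second is at most $d_H(A,B)$. Throughout I use $d(\overline{x},A)=\inf_{\overline{a}\in A} d(\overline{x},\overline{a})$ and
\begin{gather*}
  d_H(A,B)=\max\bigl\{\sup_{\overline{a}\in A} d(\overline{a},B),\ \sup_{\overline{b}\in B} d(\overline{b},A)\bigr\}.
\end{gather*}

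For the first term, fix any $\overline{a}\in A$. Then
\begin{gather*}
  d(\overline{x},A)\leq d(\overline{x},\overline{a})\leq d(\overline{x},\overline{y})+d(\overline{y},\overline{a}).
\end{gather*}
Taking the infimum over $\overline{a}\in A$ gives $d(\overline{x},A)\leq d(\overline{x},\overline{y})+d(\overline{y},A)$. Swapping $\overline{x}$ and $\overline{y}$ gives the reverse bound, so the point-to-set distance is $1$-Lipschitz in the point.

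For the second term, fix $\overline{b}\in B$ and any $\overline{a}\in A$. Then $d(\overline{y},A)\leq d(\overline{y},\overline{a})\leq d(\overline{y},\overline{b})+d(\overline{b},\overline{a})$. Taking the infimum over $\overline{a}$ gives
\begin{gather*}
  d(\overline{y},A)\leq d(\overline{y},\overline{b})+d(\overline{b},A)\leq d(\overline{y},\overline{b})+d_H(A,B).
\end{gather*}
Taking the infimum over $\overline{b}\in B$ yields $d(\overline{y},A)\leq d(\overline{y},B)+d_H(A,B)$. The symmetric argument, with the roles of $A$ and $B$ exchanged, bounds $d(\overline{y},B)-d(\overline{y},A)$ in the same way.

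Since the lemma is stated for arbitrary $A,B\subseteq\RR^2$, the only real obstacle is the degenerate cases.
\begin{itemize}
  \item If $d_H(A,B)=\infty$, the inequality is trivial.
  \item If exactly one of $A,B$ is empty, then $d_H=\infty$ under the usual convention, so this reduces to the previous case.
  \item If both are empty, both distances equal $+\infty$. I would either adopt the convention that the left-hand side is $0$, or note that the lemma is only applied to nonempty compact sets.
\end{itemize}
I would state these conventions explicitly at the start and then run the two estimates above, which are routine.
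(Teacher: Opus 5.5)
Your proof is correct and follows essentially the same route as the paper: both pass through the intermediate quantity $d(\overline{y},A)$, use the triangle inequality to control the change in the point, and use $d(\overline{z},A)\leq d_H(A,B)$ for $\overline{z}\in B$ to control the change in the set. The differences are cosmetic: you take infima directly instead of choosing an $\epsilon$-approximate minimizer, and you treat both signs of the absolute value and the empty-set cases explicitly, whereas the paper proves one direction and leaves symmetry and these conventions implicit (it only ever applies the lemma to nonempty compact sets).
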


\begin{proof}
  Triangle inequality implies that $d(\overline{x},A)\leq d(\overline{x},\overline{y}) + d(\overline{y},A)$, so it suffices to
  show that
  \begin{gather*}
    d(\overline{y},A) \leq d(\overline{y},B) + d_H(A,B).
  \end{gather*}

  For $\epsilon > 0$, let $\overline{z}\in B$ be such that $d(\overline{y},\overline{z})\leq d(\overline{y},B)+\epsilon$. By the
  definition of Hausdorff distance $d_H(A,B)\df\max\{\sup_{\overline{a}\in A} d(\overline{a},B), \sup_{\overline{b}\in B}
  d(\overline{b},A)\}$, we must have $d(\overline{z},A)\leq d_H(A,B)$, so
  \begin{gather*}
    d(\overline{y},A)
    \leq
    d(\overline{y},\overline{z}) + d(\overline{z},A)
    \leq
    d(\overline{y},B) + \epsilon + d_H(A,B)
  \end{gather*}
  and letting $\epsilon\to 0$ concludes the proof.
\end{proof}
  
\begin{lemma}\label{lem:convexHausdorff}
  The space $\cH'$ of non-empty compact convex sets is a closed subspace of the space of non-empty compact sets with respect to
  the Hausdorff distance $d_H$.
\end{lemma}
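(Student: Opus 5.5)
We must show that if $(K_n)$ is a sequence of non-empty compact convex sets converging in $d_H$ to a non-empty compact set $K$, then $K$ is convex. Since the Hausdorff metric space $\cK$ is metrizable, closedness is equivalent to this sequential closedness, so this suffices.

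Fix $\overline{x},\overline{y}\in K$ and $t\in[0,1]$, and set $\overline{z}\df t\overline{x}+(1-t)\overline{y}$; we want $\overline{z}\in K$. Let $\epsilon_n\df d_H(K_n,K)\to 0$. By definition of $d_H$ we have $d(\overline{x},K_n)\leq\epsilon_n$ and $d(\overline{y},K_n)\leq\epsilon_n$. Since $K_n$ is compact, the infima are attained, so there are $\overline{x}_n,\overline{y}_n\in K_n$ with $d(\overline{x},\overline{x}_n)\leq\epsilon_n$ and $d(\overline{y},\overline{y}_n)\leq\epsilon_n$. By convexity of $K_n$, the point $\overline{z}_n\df t\overline{x}_n+(1-t)\overline{y}_n$ lies in $K_n$. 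The triangle inequality together with homogeneity of the Euclidean norm gives
\begin{gather*}
  d(\overline{z},\overline{z}_n)\leq t\,d(\overline{x},\overline{x}_n)+(1-t)\,d(\overline{y},\overline{y}_n)\leq\epsilon_n.
\end{gather*}

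Next, since $\overline{z}_n\in K_n$, the definition of $d_H$ gives $d(\overline{z}_n,K)\leq\epsilon_n$. Applying Lemma~\ref{lem:Lipschitz} (or just the triangle inequality),
\begin{gather*}
  d(\overline{z},K)\leq d(\overline{z},\overline{z}_n)+d(\overline{z}_n,K)\leq 2\epsilon_n\to 0.
\end{gather*}
Thus $d(\overline{z},K)=0$, and since $K$ is compact, hence closed, $\overline{z}\in K$. So $K$ is convex, and $\cH'$ is closed in $\cK$.

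No step is a real obstacle. The only points needing care are that nearest points exist, which uses compactness (otherwise one can choose points within $\epsilon_n+1/n$), and that the limit $K$ is closed, which holds because $\cK$ consists of compact sets.
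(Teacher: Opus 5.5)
Your proof is correct and is essentially the paper's argument: approximate $\overline{x},\overline{y}$ by points of $K_n$, use convexity of $K_n$ to get $\overline{z}_n\in K_n$ close to $\overline{z}$, and transfer back to the limit set via the Hausdorff bound. The only difference is presentation. The paper argues by contradiction, taking $\epsilon=d(\overline{z},C)>0$ and fixing one $n$ with $d_H(H_n,C)\leq\epsilon/3$, whereas you let $n\to\infty$ and conclude $d(\overline{z},K)=0$ directly; you also correctly write $\leq$ in the convex-combination estimate, where the paper writes an equality.
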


\begin{proof}
  Suppose $(H_n)_{n < \omega}$ is a sequence of non-empty compact convex sets conveging to some non-empty compact set $C$ and
  suppose for a contradiction that $C$ is not convex. Then there exist $\overline{x},\overline{y}\in C$ and $t\in[0,1]$ such
  that $\overline{z}\df t\overline{x} + (1-t)\overline{y}\notin C$. Since $C$ is compact (hence closed), the point
  $\overline{z}$ must be at a positive distance $\epsilon\df d(\overline{z},C) > 0$ of $C$. Let $n < \omega$ be large enough so
  that $d_H(H_n,C)\leq\epsilon/3$.

  By the definition of Hausdorff distance (and since $H_n$ is closed), there must exist points $\overline{x}_n,\overline{y}_n\in
  H_n$ with $d(\overline{x},\overline{x}_n),d(\overline{y},\overline{y}_n)\leq\epsilon/3$.

  Let $\overline{z}_n\df t\cdot\overline{x}_n + (1-t)\cdot\overline{y}_n$ and note that
  \begin{gather*}
    d(\overline{z},\overline{z}_n)
    =
    t\cdot d(\overline{x},\overline{x}_n) + (1-t)\cdot d(\overline{y},\overline{y}_n)
    \leq
    t\cdot\frac{\epsilon}{3} + (1-t)\cdot\frac{\epsilon}{3}
    =
    \frac{\epsilon}{3}.
  \end{gather*}
  On the other hand, since $H_n$ is convex, we must have $\overline{z}_n\in H_n$, hence $d(\overline{z}_n,H_n)=0$. But then
  Lemma~\ref{lem:Lipschitz} gives
  \begin{gather*}
    \epsilon
    =
    d(\overline{z},C)
    \leq
    d(\overline{z},\overline{z}_n) + d(\overline{z}_n,H_n) + d_H(H_n,C)
    \leq
    \frac{\epsilon}{3} + 0 + \frac{\epsilon}{3}
    =
    \frac{2\epsilon}{3},
  \end{gather*}
  a contradiction.
\end{proof}

\end{document}